\newtheorem{theorem}{Теорема}
\newtheorem*{collorary}{Следствие}
\newtheorem{proposition}{Предложение}
\newtheorem{lemma}{Лемма}
\newtheorem{conjecture}{Гипотеза}
\theoremstyle{definition}
\newtheorem{definition}{Определение}
\newtheorem{remark}{Замечание}
\begin{document}

\begin{center}
\begin{Large}
\textbf{Кратности предельных циклов, рождающихся при разрушении гиперболических полициклов}
\end{Large}

Дуков А.В.\footnote{Московский государственный университет им. М.В.Ломоносова, Россия}\footnote{Исследование выполнено при финансовой поддержке РФФИ в рамках научного проекта № 20-01-00420}

\today
\end{center}

\tableofcontents
\newpage

\begin{abstract}
We consider the multiplicity of limit cycles that appear when a hyperbolic polycycle is perturbed. We prove, in particular, that if such unfolding happens in generic finite-parameter families, the multiplicity of every new limit cycle does not exceed the number of separatrix connections in the polycycle.

-----

В статье рассматривается вопрос о кратности предельных циклов, рождающихся при разрушении произвольного гиперболического полицикла. В частности, доказано, что при возмущении внутри типичного конечно-параметрического семейства кратность любого родившегося предельного цикла не превосходит числа сепаратрисных связок, образующих полицикл.
\end{abstract}

\section{Введение}\label{sec:introduction}

Рассмотрим произвольное векторное поле $v_0$ на двумерном многообразии $\mathcal{M}$. На протяжении всей статьи многообразие $\mathcal{M}$ полагаем ориентируемым. Напомним определение полицикла:
\begin{definition}
\textit{Полициклом} векторного поля называется любой конечный ориентируемый граф $\Gamma$, удовлетворяющий следующим требованиям:
\begin{itemize}
\item вершинами графа $\Gamma$ являются особые точки поля;
\item рёбрами графа $\Gamma$ являются фазовые кривые поля, не являющиеся особыми точками; ориентация задаётся временем;
\item граф $\Gamma$ --- эйлеров (существует цикл, обходящий каждое ребро по одному разу).
\end{itemize}
\end{definition}

Пусть поле $v_0$ имеет некоторый полицикл $\gamma$. Рассмотрим $k$-параметрическое семейство $V=\{v_\delta\}$, $\delta\in B = (\mathbb{R}^k, 0)$, возмущающее данный полицикл. Будем говорить, что при разрушении полицикла $\gamma$ поля $v_0$ в семействе $V$ рождается предельный цикл (кратности $m$), если существует такая стремящаяся к нулю (которому соответствует поле $v_0$) последовательность значений параметров $\{\delta_\alpha\}_{\alpha \in \mathbb{N}}$, что для любого $\alpha$ поле $v_{\delta_\alpha}$ имеет предельный цикл $LC(\delta_\alpha)$ (кратности $m$), причём последовательность предельных циклов $LC(\delta_\alpha)$ при $\delta_\alpha \to 0$ стремится в метрике Хаусдорфа к полициклу $\gamma$ или к его непустой части.

Максимальное число предельных циклов, которые рождаются при разрушении полицикла $\gamma$ при переходе к близкому к $v_0$ полю семейства, называется \textit{цикличностью} данного полицикла. Полицикл называется \textit{элементарным}, если он образован лишь элементарными особыми точками, то есть только особыми точками с хотя бы одним ненулевым собственным значением. Максимальная цикличность, которую может иметь нетривиальный элементарный полицикл, возмущаемый в типичном $k$-параметрическом семействе, обозначается через $E(k)$ или же $E(n,k)$, где $n$ --- число особых точек, образующих полицикл.

В 30-х годах Андронов и Леонтович, а несколько позже и Хопф доказали равенство $E(1)=1$. С 1970-х по 1993гг. в результате трудов целого ряда математиков (Муртада, Руссари, Руссо, Дюмортье \cite{DRR}, Грозовский \cite{Gr}, Ройтенберг \cite{R}, Трифонов \cite{Trif}) было получено равенство $E(2)=2$. Подробнее об истории исследования цикличности полициклов коразмерности 1 и 2 см. \cite{DRR}. В 1997 году Трифоновым получена оценка $E(3) = 3$ \cite{Trif}.

На рубеже веков были предприняты попытки оценить цикличность элементарных полициклов для произвольного числа параметров $k$. В 1995 году Ильяшенко и Яковенко доказали, что для любого $k$ число $E(k)$ конечно \cite{IY}. В 2003 году в работе \cite{K} Калошиным получена оценка:
\begin{align*}
E(k) \leq 2^{25k^2}.
\end{align*}
Чуть позже в 2010 году Каледа и Щуров \cite{KS} доказали неравенство:
\begin{align*}
E(n,k) \leq C(n)k^{3n},
\end{align*}
где $C(n) = 2^{5n^2+20n}$, $n$ --- число вершин полицикла.

Как видно из приведённого обзора, ранние оценки были точны, но относились лишь к полициклам малой коразмерности. Поздние оценки, полученные Калошиным, Каледой и Щуровым, распространяются на произвольное число параметров, но при этом вряд ли являются точными. Это объясняется тем, что вопрос об оценке цикличности достаточно сложен.

В связи с этим возникает идея рассмотреть заведомо более простую задачу: \textit{какова максимальная кратность предельного цикла, рождающегося при разрушении полицикла в типичном конечно-параметрическом семействе?} Обратим внимание, что в работе речь пойдёт не об элементарных, а лишь о гиперболических полициклах, то есть образованными лишь гиперболическими сёдлами. Оказывается, что поставленная задача легко решается для произвольного числа параметров, причём оценка кратности зависит от количества сёдел полицикла не более чем линейно.

\paragraph{Основные результаты.}
Пусть поле $v_0$ содержит полицикл $\gamma$, образованный $n$ сепаратрисными связками гиперболических сёдел $S_1, \ldots, S_n$ (некоторые из сёдел могут совпадать). Обозначим характеристические числа сёдел $S_1, \ldots, S_n$ через $\lambda_1, \ldots, \lambda_n$ соответственно. Напомним, что \textit{характеристическим числом} седла называется модуль отношения собственных чисел, причём отрицательное стоит в числителе.

Основной результат работы представлен следующими двумя теоремами.
\begin{theorem}\label{th:main}
При возмущении полицикла $\gamma$ в типичном $n$-параметрическом семействе $V$ кратность любого рождающегося предельного цикла не превосходит $n$.
\end{theorem}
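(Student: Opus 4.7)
My plan is to introduce coordinates adapted to the polycycle, parametrize the family by the $n$ separatrix splittings, and then deduce the multiplicity bound from a codimension count on the appropriate multiplicity locus.

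First I would place a transversal $\Gamma_i$ on the $i$-th separatrix connection with local coordinate $y_i$ vanishing on the unperturbed separatrix, and define $\mu_i(\delta)$ as the signed splitting of the $i$-th connection measured on $\Gamma_i$. The typicality hypothesis for an $n$-parameter family amounts to saying that $\delta \mapsto (\mu_1(\delta), \ldots, \mu_n(\delta))$ is a local diffeomorphism at $\delta = 0$, so one may use $\mu = (\mu_1, \ldots, \mu_n)$ as new parameters. The passage from $\Gamma_i$ to $\Gamma_{i+1}$ then takes the form $y_{i+1} = \sigma_{i+1}(y_i, \mu) - \mu_{i+1}$, where $\sigma_{i+1}$ is the Dulac correspondence through $S_{i+1}$ with leading asymptotics $\sigma_{i+1}(y, 0) \sim c_{i+1} y^{\lambda_{i+1}}$ as $y \to 0^+$. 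Composing around $\gamma$ gives the return map $P(y_1, \mu)$, and a limit cycle is a zero of the displacement $D(y_1, \mu) = P(y_1, \mu) - y_1$ whose multiplicity equals the order of vanishing of $D(\cdot, \mu)$ at $y_1^*$.

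The central step is a codimension count on the multiplicity-$\geq m$ locus
\[
\mathcal{M}_m = \bigl\{ (y_1, \mu) : y_1 > 0, \; \partial_{y_1}^{j} D(y_1, \mu) = 0 \text{ for } j = 0, 1, \ldots, m - 1 \bigr\}.
\]
An emerging multiplicity-$m$ limit cycle corresponds, by definition, to a sequence of points in $\mathcal{M}_m$ converging to $(y_1, \mu) = (0, 0)$. At a point of $\mathcal{M}_m$ with multiplicity exactly $m$, the gradients of the defining equations reduce to $m - 1$ vectors $\partial_{\mu} \partial_{y_1}^{j} D \in \mathbb{R}^n$ (for $j = 0, \ldots, m - 2$) together with a vector whose $y_1$-component is $\partial_{y_1}^{m} D \neq 0$. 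A direct chain-rule computation gives $\partial_{\mu_1} D \equiv -1$, while for $i \geq 2$, $\partial_{\mu_i} D$ equals (up to sign) a product of Dulac derivatives with leading asymptotics $y_1^{\gamma_i}$, where $\gamma_i = \Lambda - \lambda_2 \cdots \lambda_i$ and $\Lambda = \lambda_1 \cdots \lambda_n$. Non-resonance of the $\lambda_j$ (part of the typicality assumption) ensures that the relevant $n$ vectors are linearly independent, so $\dim \mathcal{M}_m = n + 1 - m$ generically. Accumulation at $(0, 0)$ then forces $\dim \mathcal{M}_m \geq 1$, i.e., $m \leq n$; the borderline case $m = n + 1$ yields only isolated zero-dimensional solutions, which for a typical family do not accumulate at the corner $(0, 0)$, as already visible in the simplest case $n = 1$ where the unique multiplicity-$2$ limit cycle of a generically unfolded homoclinic loop sits at a fixed positive distance from the saddle.

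The main obstacle is threefold: (i) justifying the rank condition on the Jacobian of $(D, \partial_{y_1} D, \ldots, \partial_{y_1}^{m-1} D)$ with respect to $(y_1, \mu)$ near the corner $y_1 = 0$, where $D$ has Dulac-type non-smoothness and one must carefully track the leading asymptotic behaviour (this is where non-resonance of the $\lambda_j$ is invoked); (ii) ruling out accumulation of the discrete multiplicity-$(n+1)$ solutions at $(0, 0)$, which requires a Newton-diagram analysis of $D$ and invocation of the typicality of the family to exclude exceptional coincidences; (iii) controlling the subdominant terms of the Dulac expansion of $D$ (which are neither polynomial nor analytic but belong to the Mourtada--Il'yashenko class of quasi-analytic Dulac series) so that they do not distort the zero count. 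The precise content of "typical $n$-parameter family" is invoked in each of these steps to eliminate the pathological configurations.
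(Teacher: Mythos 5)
There is a genuine gap, and it sits exactly at the point that constitutes the whole difficulty of the theorem. Your dimension count on $\mathcal{M}_m$ cannot deliver the bound: accumulation of solutions at the corner $(y_1,\mu)=(0,0)$ does not require $\mathcal{M}_m$ to have positive dimension, since a set of isolated (zero-dimensional) solutions of $D=\partial_{y_1}D=\dots=\partial_{y_1}^{n}D=0$ may perfectly well accumulate at a boundary point of the domain. You acknowledge this in the borderline case $m=n+1$, but there you simply assert that for a typical family such accumulation does not occur, deferring to an unspecified Newton-diagram analysis; that assertion is precisely the theorem to be proved. The paper's proof is devoted to exactly this step: it passes to $\mathcal{D}=\ln\Delta'$, shows (Предложение \ref{prop:common_view}) that $\mathcal{D}^{(l)}$ is a polynomial $P_{nl}$, homogeneous in the quantities $Z_i=F_{i-1}'/F_{i-1}$, takes the limit of these polynomials along a hypothetical accumulating sequence using Trifonov's asymptotics of the saddle correspondence maps (Лемма \ref{lmm:saddle_limit}), and thereby produces the limit system $Q_{nl}(z)=0$, $l=1,\dots,n-1$, on $\mathbb{R}P^{n-1}$ (Предложение \ref{prop:eqv_Q}); Лемма \ref{lmm:cycle_CP} forces some coordinate $z_j$ to vanish when $\lambda_1\cdots\lambda_n\neq 1$, and the existence of a nontrivial solution is then obstructed by the nonvanishing of the resultant $\mathcal{R}_{n-1}$, whose nontriviality is Лемма \ref{lmm:nonull} (Newton identities). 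Nothing in your proposal plays the role of this limit system, so the decisive exclusion of multiplicity-$(n+1)$ cycles is missing.

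Moreover, the genericity condition you invoke is not the right one. The paper's condition is a polynomial inequality $\mathcal{L}_n(\lambda_1,\dots,\lambda_n)\neq 0$ imposed only on the characteristic numbers of $v_0$; for $n\geq 4$ it contains factors such as $M(\lambda_1,\lambda_2,\lambda_3)=4(\lambda_1\lambda_2\lambda_3-1)-(\lambda_1-1)(\lambda_2-1)(\lambda_3-1)$, which are not resonance relations, so ``non-resonance of the $\lambda_j$'' plus transversal unfolding of the connections (your reading of typicality) would not exclude the degenerate configurations that the resultant condition excludes; conversely, the paper never needs $\delta\mapsto(\mu_1,\dots,\mu_n)$ to be a local diffeomorphism. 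Your rank claim for the Jacobian of $(D,\partial_{y_1}D,\dots,\partial_{y_1}^{m-1}D)$ near $y_1=0$ is also unproved and delicate, because all equations degenerate simultaneously at the corner, where derivatives of Dulac-type maps blow up; it is exactly this asymptotic interplay, encoded by the polynomials $Q_{nl}$, that decides the answer. (Your side remark about the case $n=1$ is also inaccurate in detail: for $\lambda_1\neq 1$ the paper shows no multiplicity-two cycle can converge to the loop at all, via $\ln\Delta'\to\pm\infty$.)
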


Условие типичности следующее: характеристические числа $\lambda_1, \ldots, \lambda_n$ сёдел, образующих полицикл $\gamma$, должны удовлетворять неравенству
\begin{align}
\mathcal{L}_n(\lambda_1, \ldots, \lambda_n) \neq 0, \label{eq:L_n_ineq}
\end{align}
где $\mathcal{L}_n$ --- некоторый нетривиальный многочлен. При заданном числе $n$ данный многочлен можно выбрать не зависящим ни от полицикла $\gamma$, ни от поля $v_0$.



Как будет видно далее в параграфе \ref{sec:multiple_cycles}, из наличия кратного предельного цикла следует, что некоторая полиномиальная система однородных уравнений, коэффициенты которой зависят от характеристических чисел $\lambda_1, \ldots, \lambda_n$, имеет нетривиальное решение. Забегая вперёд, отметим, что многочлен $\mathcal{L}_n$ выражается через результант данной полиномиальной системы.

\begin{collorary}
Пусть поле $v_0 \in Vect^\infty(\mathcal{M})$ имеет описанный выше полицикл $\gamma$. Пусть характеристические числа сёдел данного полицикла удовлетворяют неравенству (\ref{eq:L_n_ineq}). Тогда при возмущении поля $v_0$ в пространстве $Vect^\infty(\mathcal{M})$ из полицикла $\gamma$ не рождается предельный цикл кратности больше $n$.
\end{collorary}
\begin{proof}
Предположим, что существует последовательность полей $v_k \to v_0$, содержащих предельный цикл $LC(v_k)$ кратности $n+1$ или более, причём данный цикл стремится при $k\to \infty$ к полициклу $\gamma$. Тогда рассмотрим такое $n$-параметрическое семейство $V:\mathbb{R}^n \to Vect^\infty(\mathcal{M})$, что для любого натурального $k$ отрезок $\delta_1 \in [\frac{1}{k}, \frac{1}{k+1}]$ переходит в отрезок $[v_k, v_{k+1}]$, луч $\delta_1 \in [1, +\infty)$ отображается в поле $v_1$, а луч $\delta_1 \in (-\infty,0]$ --- в поле $v_0$. Зависимость от остальных параметров $\delta_2, \ldots, \delta_n$ полагаем фиктивной. Тогда семейство $V$ удовлетворяет описанному выше условию типичности. Таким образом, приходим к противоречию с теоремой \ref{th:main}.
\end{proof}

В случае полицикла малой коразмерности многочлен $\mathcal{L}_n$ можно выписать явно. Для этого нам потребуется определить несколько многочленов.

Для любого натурального $n$ обозначим через $\Lambda_n$ следующий многочлен от характеристических чисел $\lambda_1, \ldots, \lambda_n$:
\begin{align*}
\Lambda_n(\lambda_1, \ldots, \lambda_n) = \prod\limits_{I\neq (0, \ldots, 0)} (\lambda^I - 1),
\end{align*}
где $I=(i_1, \ldots, i_n)$ --- мультииндекс. Через $\lambda^I$ мы обозначили произведение $\lambda_1^{i_1}\ldots\lambda_n^{i_n}$. Для любого $j=1, \ldots, n$ компонента мультииндекса $i_j \in \{0, 1\}$ определяет, входит ли число $\lambda_j$ в произведение $\lambda^I$ или нет. Например, $\Lambda_2(\lambda_1, \lambda_2) = (\lambda_1-1)(\lambda_2-1)(\lambda_1\lambda_2-1)$.

Помимо этого, через $M(\lambda_1, \lambda_2, \lambda_3)$ обозначим следующий многочлен:
\begin{align*}
M(\lambda_1, \lambda_2, \lambda_3) = 4(\lambda_1\lambda_2\lambda_3 -1) - (\lambda_1-1)(\lambda_2-1)(\lambda_3-1).
\end{align*}

\begin{theorem}\label{th:small}
При $n=1,2,3,4$ в качестве многочлена $\mathcal{L}_n$, задающего условие типичности (\ref{eq:L_n_ineq}), можно взять следующие многочлены:
\begin{enumerate}
\item $\mathcal{L}_1(\lambda_1) = \Lambda_1(\lambda_1)$;
\item $\mathcal{L}_2(\lambda_1, \lambda_2) = \Lambda_2(\lambda_1, \lambda_2)$;
\item $\mathcal{L}_3(\lambda_1, \lambda_2, \lambda_3) = \Lambda_3(\lambda_1, \lambda_2, \lambda_3)$;
\item $\mathcal{L}_4(\lambda_1, \lambda_2, \lambda_3, \lambda_4) = \Lambda_4(\lambda_1, \lambda_2, \lambda_3, \lambda_4)\cdot $

$\cdot M(\lambda_1,\lambda_2,\lambda_3) M(\lambda_1,\lambda_2,\lambda_4) M(\lambda_1,\lambda_3,\lambda_4) M(\lambda_2,\lambda_3,\lambda_4).$
\end{enumerate}
\end{theorem}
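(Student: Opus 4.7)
Теорема~\ref{th:small} представляет собой явное нахождение многочлена $\mathcal{L}_n$ из теоремы~\ref{th:main} при $n \le 4$. Согласно сведению, проведённому в параграфе~\ref{sec:multiple_cycles}, существование предельного цикла кратности, большей $n$, влечёт нетривиальную разрешимость некоторой однородной полиномиальной системы относительно $\lambda_1,\ldots,\lambda_n$, а в качестве $\mathcal{L}_n$ можно взять результант этой системы. План состоит в том, чтобы для каждого $n \in \{1,2,3,4\}$ этот результант явно вычислить: сначала выделить очевидные неприводимые множители, отвечающие резонансам, затем (для $n \le 3$) с помощью подсчёта степени убедиться, что ими результант исчерпывается, и, наконец, при $n=4$ обнаружить дополнительные множители $M(\lambda_i,\lambda_j,\lambda_k)$ и показать, что других множителей нет.

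\textbf{Резонансные множители.} Для каждого нетривиального мультииндекса $I \in \{0,1\}^n$ условие $\lambda^I = 1$ приводит к совпадению двух мономов в асимптотическом разложении отображения Пуанкаре вдоль полицикла (моном $x^{\lambda^I}$ вырождается в $x$ или в константу), что влечёт линейную зависимость между уравнениями системы из параграфа~\ref{sec:multiple_cycles}. Следовательно, $(\lambda^I - 1)$ делит $\mathcal{L}_n$ для каждого такого $I$, так что $\Lambda_n \mid \mathcal{L}_n$. В случае $n=1$ система одномерна, и этого единственного множителя уже достаточно. При $n=2$ и $n=3$ требуется явно вычислить определитель обобщённого типа Вандермонда, отвечающий данной системе, и убедиться, что его полная степень по $\lambda$ совпадает со степенью $\Lambda_n$; вместе с уже установленной делимостью это даст равенство $\mathcal{L}_n = \Lambda_n$ с точностью до ненулевой константы.

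\textbf{Случай $n=4$.} Здесь степень результанта строго превосходит степень $\Lambda_4$, так что должны появиться новые неприводимые множители. То, что они перечисляются трёхэлементными подмножествами множества $\{1,2,3,4\}$ и зависят лишь от соответствующих трёх характеристических чисел, подсказывает искать вырождения, уже заметные на трёхмерных срезах. Делимость $M(\lambda_i,\lambda_j,\lambda_k) \mid \mathcal{L}_4$ проверяется явным построением нетривиального решения определяющей системы в общей точке гиперповерхности $\{M = 0\}$; $S_4$-симметрия задачи позволяет одновременно охватить все четыре тройки. Окончательное сравнение степеней должно показать, что произведение $\Lambda_4 \cdot \prod_{1 \le i < j < k \le 4} M(\lambda_i,\lambda_j,\lambda_k)$ и есть искомый $\mathcal{L}_4$.

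\textbf{Основное препятствие.} Самым тонким является случай $n=4$: появление множителей $M$ не вытекает из резонансных соображений, и для их точного нахождения требуется либо символьное раскрытие результанта системы умеренного размера, либо аккуратный подбор специализаций вдоль страт дискриминантного множества, сопровождаемый проверкой отсутствия прочих сомножителей. Симметрия задачи относительно группы $S_n$ и явный вид мономов $\lambda^I$ должны сделать такую проверку проводимой, но именно здесь прямых вычислений не избежать.
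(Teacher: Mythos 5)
Ваш общий план совпадает по духу с доказательством в статье: теорема~\ref{th:small} сводится к явному нахождению результантов $\mathcal{R}_{n-1}$ систем (\ref{eq:Q_mu_W}) и подстановке их в формулу (\ref{eq:main_Q}). Однако обратите внимание на направление импликации, которое реально требуется. Условие типичности означает: из $\mathcal{L}_n(\lambda)\neq 0$ должно следовать отсутствие нетривиальных решений, то есть множество нулей произведения $(\lambda_1\cdots\lambda_n-1)\prod_j\mathcal{R}_{n-1}(\lambda_1,\ldots,\hat\lambda_j,\ldots,\lambda_n)$ обязано \emph{содержаться} в множестве нулей кандидата. Ваш же «резонансный» аргумент (к тому же изложенный эвристически: совпадение мономов в асимптотике само по себе не является утверждением об алгебраической системе $Q_{n-1,l}$; нужно предъявлять нетривиальные решения на каждой гиперповерхности $\{\mu^J=1\}$, что для $\mu_j=1$ даёт свойство (\ref{eq:link_property}) и координатная точка, а для $|J|\ge 2$ требует отдельной выкладки) даёт лишь обратное включение --- делимость кандидата в результант. Это спасается только если удаётся поднять делимость до совпадения множеств нулей, например точным подсчётом степени; для $\mathcal{R}_1$ и $\mathcal{R}_2$ это действительно проходит (и по объёму не проще прямого вычисления $3\times3$ определителя Сильвестра, т.е. выкладки (\ref{eq:small_n2_3}) из статьи), так что случаи $n\le 3$ у вас закрываются.

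Существенный пробел --- случай $n=4$. Здесь сравнение степеней, на которое вы ссылаетесь как на завершающий шаг, не работает: оценка Маколея для результанта форм степеней $1,2,3$ от трёх переменных с коэффициентами $\mu$-степеней $1,2,3$ даёт $\deg_\mu\mathcal{R}_3\le 2\cdot3\cdot1+1\cdot3\cdot2+1\cdot2\cdot3=18$, тогда как $\deg(\Lambda_3\cdot M)=15$. Значит, делимость $\Lambda_3 M\mid\mathcal{R}_3$ не исключает дополнительной компоненты множества $\{\mathcal{R}_3=0\}$ степени до $3$, а при её наличии многочлен $\Lambda_4\prod M$ уже не был бы допустимым $\mathcal{L}_4$. Исключение лишних компонент --- это и есть содержание вычисления в статье: линейные комбинации (\ref{eq:small_n3_1})--(\ref{eq:small_n3_big}), сведение при $w_1w_2w_3\neq0$ к $2\times2$ определителю, содержащему множитель $M$, разбор случая нулевых координат через замечание \ref{rem:Z_not_0} и младшие результанты $\mathcal{R}_2$, и лишь затем сверка с точным видом (\ref{eq:small_P3}). Вы честно отмечаете, что прямых вычислений не избежать, но именно этот шаг в предложении отсутствует, а названный вами инструмент (подсчёт степеней) его заменить не может.
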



Вся работа разбита на два раздела. Первый раздел целиком занимают промежуточные вычисления, основная задача которых --- свести вопрос о наличии кратного предельного цикла к наличию решения некоторой системы алгебраических уравнений. Во втором разделе применяются алгебраические методы, которые позволяют исследовать полученную систему. При помощи этих методов и доказываются обе теоремы.

\section{От векторных полей к многочленам}
\subsection{Отображения соответствия сёдел}\label{sec:saddle_maps}

На протяжении всей статьи полагаем, что выполнены условия теоремы \ref{th:main}. К каждой сепаратрисной связке полицикла $\gamma$ проведём $C^\infty$-гладкую трансверсаль: для любого $i=1, \ldots, n$ трансверсаль к связке сёдел $S_i$ и $S_{i+1}$ обозначим через $\Gamma_i$ (рис. \ref{fig:initial_polycycle}a). Также считаем, что для любого $i=1,\ldots,n$ трансверсаль $\Gamma_i$ не зависит от параметра $\delta$ и параметр $\delta$ выбирается настолько малым, что каждая из трансверсалей $\Gamma_i$ остаётся трансверсалной возмущённому полю $v_\delta$.

В поле $v_\delta$ рассмотрим произвольное седло $S_i(\delta)$ и две соседние к нему трансверсали $\Gamma_{i-1}$ и $\Gamma_i$ (полагаем, что $\Gamma_0 = \Gamma_n$). Точку пересечения трансверсали $\Gamma_{i-1}$ и входящей сепаратрисы седла $S_i(\delta)$ обозначим через $s_i(\delta)$ (от слова <<stable>>). Точку пересечения трансверсали $\Gamma_i$ и выходящей сепаратрисы седла $S_i(\delta)$ обозначим через $u_i(\delta)$ (от слова <<unstable>>).

Точка $s_i(\delta)$ делит трансверсаль $\Gamma_{i-1}$ на две полутрансверсали: фазовые кривые, начинающиеся с одной из полутрансверсалей, проходят вдоль седла $S_i(\delta)$ и либо покидают окрестность полицикла, либо пересекают трансверсаль, отличную от $\Gamma_i$ (здесь и далее речь идёт о первом пересечении с любой из трансверсалей); фазовые кривые, начинающиеся с другой полутрансверсали, проходят вдоль седла $S_i(\delta)$ и пересекают трансверсаль $\Gamma_i$. Последнюю из этих двух полутрансверсалей обозначим через $\Gamma_{i-1}^-(\delta)$.

Выберем на трансверсали $\Gamma_{i-1}$ такой натуральный параметр (карту), что точка $s_i(\delta)$ имеет в этой карте координату 0, а любая точка полутрансвесали $\Gamma_{i-1}^-(\delta)$ имеет положительную координату.

Точка $u_i(\delta)$ делит трансверсаль $\Gamma_i$ на две полутрансверсали. Ту из этих полутрансверсалей, которую пересекают фазовые кривые, берущие начало с полутрансверсали $\Gamma_{i-1}^-(\delta)$, обозначим через $\Gamma_i^+(\delta)$. Выберем на трансверсали $\Gamma_i$ такой натуральный параметр (карту), что точка $u_i(\delta)$ имеет в этой карте координату 0, а любая точка полутрансверсали $\Gamma_i^+(\delta)$ имеет положительную координату.

Таким образом, на каждой трансверсали $\Gamma_i$ нами выбраны две карты. Обозначим координату точки $u_i(\delta)$ в карте, соответствующей полутрансверсали $\Gamma_i^-(\delta)$, через $\tau_i(\delta)$. Тогда переход из карты, соответствующей полутрансверсали $\Gamma_i^+(\delta)$, в карту, соответствующую полутрансверсали $\Gamma_i^-(\delta)$, осуществляется отображением $x \mapsto \tau_i(\delta) \pm x$. 

Для любого $i=1,\ldots,n$ определено отображение соответствия $\Delta_i(\delta, \cdot): \Gamma_{i-1}^-(\delta) \to \Gamma_i^+(\delta)$ седла $S_i(\delta)$. В выбранных выше координатах на полутрансверсалях $\Gamma_{i-1}^-(\delta)$ и $\Gamma_i^+(\delta)$ отбражение $\Delta_i(\delta, \cdot)$ принимает вид $\Delta_i(\delta, \cdot): \mathbb{R}_{>0} \to \mathbb{R}_{>0}$. Это отображение непрерывно зависит от параметра $\delta$, а при фиксированном $\delta$ является $C^\infty$-гладким по аргументу $x$.

\textbf{\begin{figure}[h!]
\centering
\includegraphics[width=1\textwidth]{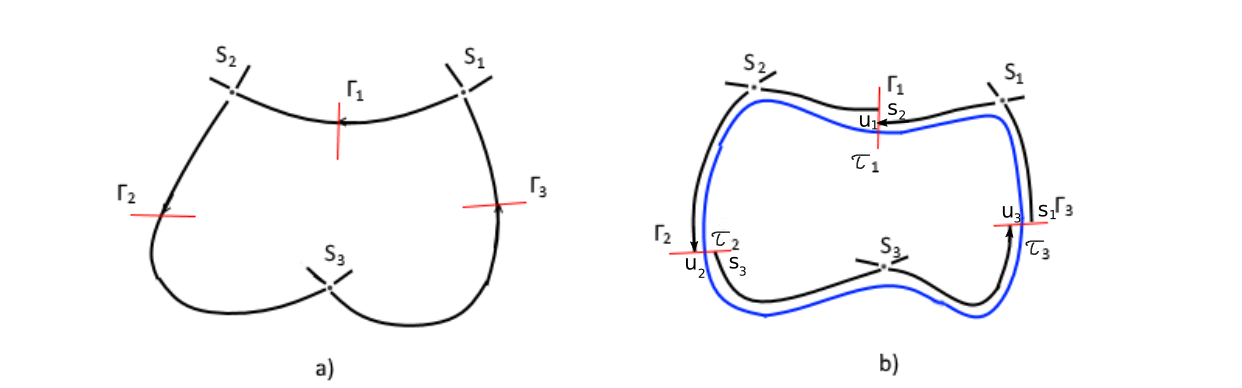}\\
\caption{a) Невозмущённый полицикл; b) предельный цикл, родившийся из полицикла.}\label{fig:initial_polycycle}
\end{figure}}

Рассмотрим отображение Пуанкаре $\Delta(\delta, \cdot): \Gamma_n^-(\delta) \to \Gamma_n$ заданного полицикла (рис. \ref{fig:initial_polycycle}b). Оно представимо в виде:
\begin{align}
\Delta(\delta, \cdot) = f_n(\delta, \cdot) \circ \ldots \circ f_1(\delta, \cdot), \label{eq:saddle_maps_Delta}
\end{align}
где
\begin{align}
f_i(\delta, \cdot): \Gamma_{i-1}^-(\delta) \to \Gamma_i, \quad f_i(x) = \tau_i(\delta) \pm \Delta_i(\delta, x). \label{eq:saddle_maps_fs}
\end{align}
Знак в формуле (\ref{eq:saddle_maps_fs}) определяется топологией полицикла. Точное его задание нам будет не важно.

\subsection{Уравнения на кратные предельные циклы}\label{sec:Poincare}
Пусть при некотором значении $\delta$ поле $v_\delta$ имеет предельный цикл $LC(\delta)$, родившийся при разрушении исходного полицикла $\gamma$. Пусть этот цикл пересекает полутрансверсаль $\Gamma_n^-(\delta)$ в точке с координатой $x_0 = x_0(\delta)$. Тогда отображение Пуанкаре имеет неподвижную точку, то есть пара $(\delta, x_0(\delta))$ является решением уравнения:
\begin{align}
\Delta(\delta, x) = x.\label{eq:Poincare_fixed}
\end{align}
Если же предельный цикл имеет кратность $n+1$ или более, то при $x=x_0(\delta)$ также выполнены следующие равенства:
\begin{eqnarray}
& \Delta'(\delta, x) = 1, \label{eq:Poincare_first_der} & \\
& \Delta^{(l+1)}(\delta, x) = 0, \label{eq:Poincare_ders} \quad l=1, \ldots, n-1. &
\end{eqnarray}

На протяжении всей статьи под производными $(\cdot)'$ и $(\cdot)^{(l)}$ будем понимать производную по аргументу $x$. Рассмотрим функцию 
\begin{align}
\mathcal{D}(\delta, x) = \ln\Delta'(\delta, x)
\end{align}
и связанную с ней следующую систему уравнений:
\begin{align}
\mathcal{D}^{(l)}(\delta, x) = 0, \quad l=0, \ldots, n-1. \label{eq:Poincare_another_form}
\end{align}
Поскольку векторное поле рассматривается на ориентируемом многообразии, то отображение Пуанкаре $\Delta(\delta, \cdot)$ есть сохраняющий ориентацию диффеоморфизм, заданный на полутрансверсали $\Gamma_n^-(\delta)$. Следовательно, его производная всегда положительна, что позволяет в определении функции $\mathcal{D}$ применить к функции $\Delta'$ логарифм.

Заметим, что если $x = x_0(\delta)$ --- неподвижная точка кратности $n+1$ функции $\Delta$, то $x_0(\delta)$ также удовлетворяет системе (\ref{eq:Poincare_another_form}). Это следует из того факта, что в малой окрестности точки $x_0(\delta)$ выполнено соотношение:
\begin{align*}
\mathcal{D}(\delta, x) = \ln \Delta'(\delta, x) = \ln \Big(1 + o\big((x-x_0)^{n-1}\big)\Big) = o\big((x-x_0)^{n-1}\big).
\end{align*}

\subsection{Общий вид производных большого порядка отображения Пуанкаре}
Предыдущий параграф наводит на мысль, что достаточно исследовать не само отображение Пуанкаре, а функцию $\mathcal{D}(\delta, x)$. Оказывается, что сколь угодно большие производные функции $\mathcal{D}(\delta, x)$ могут быть записаны в некоторой удобной форме.

Введём обозначение:
\begin{align}
F_i = f_i \circ \ldots \circ f_0, \quad f_0 = \mathrm{id}, \quad i=0, \ldots, n; \label{eq:common_view_Fs}
\end{align}
\begin{align}
Z_i = \frac{F_{i-1}'}{F_{i-1}}, \quad i=1,\ldots,n, \label{eq:limit_Zs}
\end{align}
где функции $f_i$ определяются формулой (\ref{eq:saddle_maps_fs}). Композиция берётся по аргументу $x$. В частности, $F_0(\delta, x) = x$ и $Z_1(\delta, x) = \frac{1}{x}$.

В новых обозначениях равенство $\mathcal{D}(\delta, x) = 0$ переписывается следующим образом:
\begin{align}
\mathcal{D}(\delta, x) = \sum\limits_{i=1}^n \ln |f_i'(F_{i-1})| = 0. \label{eq:common_view_D}
\end{align}
Помимо этого, введём ещё одно обозначение:
\begin{align}
\mu_{iq}(\delta, x) = y^q\frac{d^q}{dy^q}\ln |f_i'(y)| \Big|_{y = F_{i-1}(\delta, x)}, \quad i=1,\ldots,n; \; q \in \mathbb{N}. \label{eq:common_view_gamma}
\end{align}
В дальнейшем зависимость от переменой $x$ и параметра $\delta$ будем опускать и писать $\mu_{iq}$ и $F_{i-1}$.

\begin{proposition}\label{prop:common_view}
Для любого $l \in \mathbb{N}$ существует такой многочлен $P_{nl}$ c целыми коэффициентами, что $l$-я производная функции $\mathcal{D}$ (см. формулу (\ref{eq:Poincare_another_form})) имеет вид:
\begin{align}
\mathcal{D}^{(l)} = P_{nl}(\mu_{iq},Z_i), \quad i = 1 , \ldots, n, \quad q = 1, \ldots, l. \label{eq:common_view_terrible}
\end{align}
Многочлен $P_{nl}$ является однородным многочленом степени $l$ по переменным $Z_1, \ldots, Z_n$.
\end{proposition}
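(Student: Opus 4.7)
План доказательства состоит в индукции по $l$. База $l=1$: дифференцируя представление (\ref{eq:common_view_D}) по $x$ и пользуясь цепным правилом, получаю
\begin{align*}
\mathcal{D}' = \sum_{i=1}^n (\ln|f_i'|)'(F_{i-1})\cdot F_{i-1}' = \sum_{i=1}^n \frac{\mu_{i1}}{F_{i-1}}\cdot F_{i-1}' = \sum_{i=1}^n \mu_{i1}\, Z_i,
\end{align*}
где использовано, что по определению (\ref{eq:common_view_gamma}) при $q=1$ выполнено $\mu_{i1} = F_{i-1}\cdot(\ln|f_i'|)'(F_{i-1})$. Это многочлен с целыми коэффициентами, однородный степени $1$ по $Z_i$, в котором $\mu_{iq}$ встречается лишь при $q=1$.

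Для индуктивного шага мне понадобятся два ключевых тождества. Во-первых, полагая $L_q(y) = y^q(\ln|f_i'(y)|)^{(q)}$, так что $\mu_{iq} = L_q(F_{i-1})$, и пользуясь прямой выкладкой $L_q'(y) = qL_q(y)/y + L_{q+1}(y)/y$, получаю
\begin{align*}
\frac{d\mu_{iq}}{dx} = L_q'(F_{i-1})\cdot F_{i-1}' = (q\mu_{iq} + \mu_{i,q+1})\, Z_i.
\end{align*}
Во-вторых, из равенства $F_{i-1}' = \prod_{j=1}^{i-1} f_j'(F_{j-1})$ логарифмическим дифференцированием (по той же схеме, что и для $\mathcal{D}'$) получаю $F_{i-1}''/F_{i-1}' = \sum_{j=1}^{i-1}\mu_{j1}Z_j$, откуда
\begin{align*}
\frac{dZ_i}{dx} = \frac{F_{i-1}''}{F_{i-1}} - Z_i^2 = Z_i\sum_{j=1}^{i-1}\mu_{j1}Z_j - Z_i^2.
\end{align*}
Обе формулы дают многочлены с целыми коэффициентами: первая однородна по $Z$ степени $1$, вторая --- степени $2$.

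Индуктивный шаг после этого становится прямым. Если $\mathcal{D}^{(l)} = P_{nl}(\mu_{iq}, Z_i)$ --- многочлен с целыми коэффициентами, однородный степени $l$ по $Z_i$, с $q\leq l$, то
\begin{align*}
\mathcal{D}^{(l+1)} = \sum_{i,q}\frac{\partial P_{nl}}{\partial \mu_{iq}}\cdot\frac{d\mu_{iq}}{dx} + \sum_{i}\frac{\partial P_{nl}}{\partial Z_i}\cdot\frac{dZ_i}{dx}
\end{align*}
тоже многочлен с целыми коэффициентами. Степень по $Z$: в первой сумме $\partial/\partial\mu_{iq}$ не меняет степени (она остаётся равной $l$), а домножение на $d\mu_{iq}/dx$ прибавляет $1$; во второй --- $\partial/\partial Z_i$ понижает степень на $1$, а домножение на $dZ_i/dx$ прибавляет $2$. В обоих случаях результат однороден степени $l+1$. Индекс $q$ переменных $\mu_{iq}$ вырастает максимум до $l+1$ благодаря появлению $\mu_{i,q+1}$ в формуле для $d\mu_{iq}/dx$.

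Основная (и по сути единственная) техническая сложность --- аккуратно отследить однородность по $Z$ и допустимый диапазон индексов $q$ в переменных $\mu_{iq}$ после каждого дифференцирования. Эти проверки носят сугубо комбинаторный характер и мгновенно следуют из выписанных выше формул для $d\mu_{iq}/dx$ и $dZ_i/dx$; никаких принципиальных препятствий для индукции не возникает.
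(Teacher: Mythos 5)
Ваше доказательство верно и по существу совпадает с доказательством в статье: та же индукция по $l$ с базой $\mathcal{D}'=\sum_i\mu_{i1}Z_i$ и теми же ключевыми тождествами $\mu_{iq}'=(q\mu_{iq}+\mu_{i,q+1})Z_i$ и $Z_i'=-Z_i^2+Z_i\sum_{j<i}\mu_{j1}Z_j$, подставляемыми в производную $P_{nl}$ по цепному правилу. Ваш подсчёт однородности (степень $l+1$ в обоих слагаемых) выполнен даже аккуратнее, чем в тексте статьи.
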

\begin{proof}
Докажем утверждение индукцией по $l$. \textit{База индукции}. При $l=1$ из соотношения (\ref{eq:common_view_D}) имеем:
\begin{align}
\mathcal{D}' = \sum\limits_{i=1}^n \frac{d}{dx} \ln |f_i'(F_{i-1})| = \sum\limits_{i=1}^n y\frac{d}{dy} \ln |f_i'(y)| \Big|_{y=F_{i-1}} \frac{F_{i-1}'}{F_{i-1}} = \sum\limits_{i=1}^n \mu_{i1} Z_i. \label{eq:common_view_l1}
\end{align}
Обозначим полученный многочлен через $P_{n1}(\mu_{i1}, Z_i)$, $i=1,\ldots,n$.

\textit{Шаг индукции}. Пусть для некоторого $l$ утверждение верно. Тогда получаем:
\begin{align}
\mathcal{D}^{(l+1)} = \frac{d}{dx} P_{nl}(\mu_{iq},Z_i) = \sum_{\substack{ 1 \leq i \leq n, \\ 1 \leq q \leq l}} \frac{\partial P_{nl}}{\partial \mu_{iq}} \mu_{iq}' + \sum\limits_{i=1}^n \frac{\partial P_{nl}}{\partial Z_i} Z_i'.\label{eq:common_view_prod_deriv}
\end{align}

С учётом обозначений (\ref{eq:limit_Zs}) и (\ref{eq:common_view_gamma}) производная функции $\mu_{iq}$ имеет вид:
\begin{align}
\mu_{iq}' = \left(q y^{q-1}\frac{d^q}{dy^q} + y^q\frac{d^{q+1}}{dy^{q+1}}\right)\ln |f_i'(y)| \Bigg|_{y = F_{i-1}}F_{i-1}' = (q\mu_{iq} + \mu_{i,q+1})Z_i. \label{eq:common_view_gamma_derive}
\end{align}

Чтобы вычислить производную $Z_i'$, найдём выражение $\frac{F_{i-1}''}{F_{i-1}'}$. Из равенств (\ref{eq:common_view_Fs}) и (\ref{eq:common_view_gamma}) имеем:
\begin{eqnarray}
& \frac{F_{i-1}''}{F_{i-1}'} = (\ln |F_{i-1}'|)' = \sum\limits_{j=1}^{i-1}(\ln |f_j'(F_{j-1})|)' = & \nonumber \\
& = \sum\limits_{j=1}^{i-1}y\frac{d}{dy}\ln |f_j'(F_{j-1}(y))|\Bigg|_{y = F_{j-1}}\frac{F_{j-1}'}{F_{j-1}} = \sum\limits_{j=1}^{i-1}\mu_{j1}Z_j.& \label{eq:common_view_F_over_F}
\end{eqnarray}
Пользуясь равенством (\ref{eq:limit_Zs}) и соотношением (\ref{eq:common_view_F_over_F}), найдём производную $Z_i'$:
\begin{align}
Z_i' = \left(\frac{F_{i-1}'}{F_{i-1}} \right)' = \frac{F_{i-1}''}{F_{i-1}} - \frac{F_{i-1}'^2}{F_{i-1}^2} = \frac{F_{i-1}''}{F_{i-1}'}Z_i - Z_i^2 = -Z_i^2 + Z_i\sum\limits_{j=1}^{i-1}\mu_{j1}Z_j.
\label{eq:common_view_low_tr}
\end{align}
Подставляем формулы (\ref{eq:common_view_gamma_derive}) и (\ref{eq:common_view_low_tr}) в выражение (\ref{eq:common_view_prod_deriv}). Получаем:
\begin{align}
\mathcal{D}^{(l+1)} = \sum_{\substack{ 1 \leq i \leq n, \\ 1 \leq q \leq l}} (q\mu_{iq} + \mu_{i,q+1})Z_i \frac{\partial P_{nl}}{\partial \mu_{iq}}  + \sum\limits_{i=1}^n (-Z_i + \sum\limits_{j=1}^{i-1}\mu_{j1}Z_j) Z_i \frac{\partial P_{nl}}{\partial Z_i}. \label{eq:common_view_res_deriv}
\end{align}
Нетрудно видеть, что полученное выражение есть однородный по переменным $Z_i$ многочлен степени $l$ с целыми коэффициэнтами, который мы обозначим через $P_{n,l+1}(\mu_{iq}, Z_i)$, где $i=1, \ldots, n$, $q = 1, \ldots, l+1$. Предложение \ref{prop:common_view} доказано.
\end{proof}

\subsection{Предельный переход при $\delta, x \to 0$. O-символика}\label{sec:saddle_limit}
В этом и следующих двух параграфах мы изучим предельные свойства производных отображения $\mathcal{D}$, фигурирующих в системе уравнений (\ref{eq:Poincare_another_form}). Оказывается, что предельные значения производных функции $\mathcal{D}$ при $\delta, x \to 0$ описываются некоторыми многочленами, зависящими лишь от характеристических чисел $\lambda_1, \ldots, \lambda_n$.

Чтобы найти предел при $\delta, x \to 0$ функции $\mu_{iq}(\delta,x)$, задаваемой формулой (\ref{eq:common_view_gamma}), нам потребуется следующая лемма:
\begin{lemma}\label{lmm:saddle_limit}
Рассмотрим конечно-параметрическое семейство $V = \{v_\delta\}$, $\delta \in (\mathbb{R}^k, 0)$ $C^\infty$-гладких векторных полей на двумерной плоскости. Пусть $\Delta_S(\delta, x)$ --- отображение соответствия гиперболического седла $S(\delta)$ поля $v_\delta$ с характеристическим числом $\lambda(\delta)$, $\lambda(0) = \lambda$. Тогда для любого натурального $q$ выполнено следующее соотношение:
\begin{align*}
\lim\limits_{\delta,x\to 0} x^q \frac{d^q}{dx^q} \ln\Delta_S^\prime(\delta, x) = (-1)^{q-1}(q-1)!(\lambda-1).
\end{align*}
\end{lemma}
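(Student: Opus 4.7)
Основная идея состоит в том, чтобы выделить из отображения соответствия $\Delta_S(\delta, x)$ его главную степенную часть, для которой нужное тождество выполняется в точности, а вклад остаточных членов контролировать с помощью известных свойств гладкости отображения соответствия гиперболического седла в $C^\infty$-гладком семействе.

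Первый шаг --- воспользоваться стандартной асимптотикой: в подходящих $C^\infty$-гладких координатах на трансверсалях отображение соответствия гиперболического седла $S(\delta)$ допускает представление
\begin{align*}
\Delta_S(\delta, x) = c(\delta)\, x^{\lambda(\delta)}\bigl(1 + \varphi(\delta, x)\bigr),
\end{align*}
где функция $c(\delta) > 0$ гладко зависит от $\delta$, а остаток $\varphi(\delta, x)$ вместе со своими производными по $x$ удовлетворяет оценкам вида $x^j \partial_x^j \varphi(\delta, x) \to 0$ при $(\delta, x) \to 0$ для всех $j = 0, 1, \ldots, q+1$. Логарифмируя производную, получаем разложение
\begin{align*}
\ln \Delta_S'(\delta, x) = (\lambda(\delta) - 1)\ln x + \ln\bigl(c(\delta)\lambda(\delta)\bigr) + \psi(\delta, x),
\end{align*}
где $\psi(\delta, x) = \ln\bigl(1 + \varphi(\delta, x) + \tfrac{x}{\lambda(\delta)}\,\partial_x\varphi(\delta,x)\bigr)$. Из указанных оценок на $\varphi$ по индукции вытекает, что $x^j \psi^{(j)}(\delta, x) \to 0$ при $(\delta, x) \to 0$ для всех $j = 0, 1, \ldots, q$.

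Второй шаг --- прямое вычисление на главном слагаемом:
\begin{align*}
x^q \frac{d^q}{dx^q}\bigl((\lambda(\delta) - 1)\ln x\bigr) = (\lambda(\delta) - 1)\cdot x^q \cdot \frac{(-1)^{q-1}(q-1)!}{x^q} = (-1)^{q-1}(q-1)!\,(\lambda(\delta) - 1),
\end{align*}
при этом постоянное по $x$ слагаемое $\ln(c\lambda)$ обнуляется уже после первого дифференцирования. Переход к пределу при $(\delta, x) \to 0$ даёт в первом слагаемом ровно требуемое значение $(-1)^{q-1}(q-1)!\,(\lambda - 1)$, а вклад $\psi$ исчезает в силу оценок предыдущего абзаца.

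Основная техническая трудность --- обоснование требуемых оценок на остаточный член $\varphi(\delta, x)$. В зависимости от арифметических свойств характеристического числа $\lambda$ разложение $\Delta_S$ может содержать дополнительные слагаемые вида $x^{k\lambda}$ и $x^k \ln^m x$, однако во всех случаях после умножения на $x^q$ и $q$-кратного дифференцирования они дают вклад, стремящийся к нулю. Точное обоснование опирается на классические результаты о нормальных формах гиперболических сёдел и о гладкости отображения соответствия в $C^\infty$-гладких конечно-параметрических семействах.
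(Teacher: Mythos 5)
Ваше доказательство верно и по существу совпадает с доказательством в статье: ключевой входной факт --- представление $\Delta_S(\delta,x)=C(\delta)x^{\lambda(\delta)}(1+\varphi)$ с равномерными по $\delta$ оценками вида $x^j\partial_x^j\varphi\to 0$ --- это в точности результат Трифонова (\cite{Trif}, п.~3.1), на который ссылается автор, а дальнейшая выкладка (логарифмирование производной, $q$-кратное дифференцирование, умножение на $x^q$) идентична. Единственное отличие --- техника учёта остатка: в статье используются классы $\tilde{o}^\lambda_r$ и $\underset{\thicksim}{O}^\lambda_r$, замкнутые относительно дифференцирования, тогда как вы проводите соответствующую оценку $x^j\psi^{(j)}\to 0$ вручную (по формуле Фаа-ди-Бруно), что корректно и даёт тот же результат.
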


Для доказательства этой леммы введём два полезных класса функций, предложенных Трифоновым в работе \cite{Trif}. Пусть $\lambda\in \mathbb{R}$ и функция $f(x) \in C^ r(\mathbb{R}, 0)$ непрерывно зависит от параметра $\delta$.
\begin{enumerate}
\item Будем говорить, что функция $f$ принадлежит классу $\tilde{o}_r^\lambda$, если для любого $m=0, \ldots, r$ имеет место предел:
\begin{align*}
\lim\limits_{x,\delta \to 0} x^{m-\lambda}f^{(m)}(x) = 0.
\end{align*}
\item Будем говорить, что функция $f(x) = f(\delta,x)$ принадлежит классу $\underset{\thicksim}{O}^\lambda_r$, если для любого малого $\varepsilon > 0$ функция $x^\varepsilon f(x)$ принадлежит классу $\tilde{o}_r^\lambda$.
\end{enumerate}
Как и в случае классов $o(1)$ и $O(1)$ мы будем вместо знака принадлежности классу использовать обычный знак равенства, например, $f(x) = x + \tilde{o}^\lambda_r$, подразумевая под этим, что $f(x) - x \in \tilde{o}_r^\lambda$.

Классы $\tilde{o}_r^\lambda$ и $\underset{\thicksim}{O}^\lambda_r$ являются более удобным инструментом, чем стандартные классы функций $o(x^\lambda)$ и $O(x^\lambda)$, поскольку допускают дифференцирование. Большой перечень их свойств приведён в работе $\cite{Trif}$, параграф 2.2. Нам же потребуются лишь следующие из них:

\begin{eqnarray}
& \forall \lambda, \mu \quad \underset{\thicksim}{O}^\lambda_r \cdot \underset{\thicksim}{O}^\mu_r = \underset{\thicksim}{O}^{\lambda + \mu}_r; \label{eq:Oo1} & \\
& \forall \lambda \quad (\underset{\thicksim}{O}^\lambda_r)' = \underset{\thicksim}{O}^{\lambda-1}_{r-1}; \label{eq:Oo2} & \\
& \forall \lambda > \mu > 0 \quad \underset{\thicksim}{O}^\lambda_r = \tilde{o}^\mu_r \to 0; \label{eq:Oo3} & \\
& \forall \lambda,\; \forall g\in C^r(\mathbb{R},0) \quad g\Big(\underset{\thicksim}{O}^\lambda_r \Big) = g(0) + \underset{\thicksim}{O}^\lambda_r; \label{eq:Oo4} & \\
& \forall \lambda \quad x^\lambda = \underset{\thicksim}{O}^\lambda_\infty. \label{eq:Oo5} &
\end{eqnarray}

В работе \cite{Trif} в параграфе 3.1 доказано, что отображение соответствия $\Delta_S$ из условия леммы \ref{lmm:saddle_limit} удовлетворяют соотношению:
\begin{align}
\Delta_S(x) = C(\delta)x^{\lambda(\delta)}(1+\underset{\thicksim}{O}^1_r) \quad \text{ при } x, \delta \to 0,\label{eq:trif_repr}
\end{align}
где $C(\delta)$, $\lambda(\delta)$ --- $C^1$-гладкие функции, причём $C(0) > 0$, $\lambda(0) = \lambda$, и $r$ --- некоторое сколь угодно большое натуральное число.

\begin{proof}[Доказательсто леммы \ref{lmm:saddle_limit}]
Пользуясь свойствами (\ref{eq:Oo1}), (\ref{eq:Oo2}) и (\ref{eq:Oo5}), продифференцируем равенство (\ref{eq:trif_repr}):
\begin{align*}
\Delta_S'(\delta,x) = C(\delta)\lambda(\delta) x^{\lambda(\delta)-1}(1+\underset{\thicksim}{O}^1_{r-1}).
\end{align*}
Логарифмируя полученное выражение, в силу свойства (\ref{eq:Oo4}) имеем:
\begin{align}
\ln\Delta_S'(\delta, x) = \ln C(\delta) + \ln \lambda(\delta) + (\lambda(\delta)-1)\ln x +\underset{\thicksim}{O}^1_{r-1}. \label{eq:Oo_ln_Delta}
\end{align}
Дифференцируем ещё $q$ раз. В силу соотношения (\ref{eq:Oo2}) получаем:
\begin{align*}
\frac{d^q}{dx^q}\ln\Delta_S'(\delta, x) = (-1)^{q-1}(q-1)!(\lambda(\delta)-1)x^{-q} + \underset{\thicksim}{O}^{1-q}_{r-q-1}.
\end{align*}
Домножим на $x^q$ и устремим $\delta, x \to 0$. Пользуясь свойствами (\ref{eq:Oo1}), (\ref{eq:Oo3}) и (\ref{eq:Oo5}), приходим к требуемому равенству. Лемма \ref{lmm:saddle_limit} доказана.
\end{proof}

Поскольку согласно определению \ref{eq:saddle_maps_fs} для функций $f_i$ выполнено равенство $|f_i'| = \Delta_i'$, то по лемме \ref{lmm:saddle_limit} существует предел:

\begin{align}
\lim\limits_{\delta,x\to 0} \mu_{iq}(\delta,x) = (-1)^{q-1}(q-1)!(\lambda_i-1), \label{eq:limit_gamma_limit}
\end{align}
где $\lambda_i$ --- характеристическое число седла $S_i$ поля $v_0$.

\subsection{Предельный переход при $\delta, x \to 0$. Производные отображения Пуанкаре}
Через $\mu_{iq}^0$ обозначим выражение в правой части равенства (\ref{eq:limit_gamma_limit}). Рассмотрим следующие многочлены:
\begin{align}
Q_{nl}(z_1, \ldots, z_n) = P_{nl}(\mu_{iq}^0, z_i), \quad i = 1, \ldots, n, \quad q = 1, \ldots, l, \label{eq:limit_Qs}
\end{align}
где $P_{nl}$ --- многочлены, существование которых утверждается в предложении \ref{prop:common_view}.

\begin{proposition}\label{prop:limit}
В условиях предложения \ref{prop:common_view} для любых натуральных чисел $n$ и $l$ многочлены $Q_{nl}$ удовлетворяют следующим свойствам:
\begin{enumerate}
\item $Q_{nl} (z_1, \ldots, z_n)$ --- однородный многочлен степени $l$;
\item $Q_{nl} (z_1, \ldots, z_n) \in \mathbb{Z}[\lambda_1,\ldots,\lambda_n][z_1, \ldots, z_n]$;
\item многочлены $Q_{nl}(z_1, \ldots, z_n)$ задаются рекуррентно следующим образом:
\begin{eqnarray}
& Q_{n,1} = \sum\limits_{i=1}^n (\lambda_i-1) z_i, \label{eq:reccurent_l1} & \\
& Q_{n,l+1} = \mathfrak{D}_n Q_{nl}, \label{eq:reccurent_Ps} &
\end{eqnarray}
где
\end{enumerate}
\begin{align}
\mathfrak{D}_n = (z_1, \ldots, z_n)
\left(\begin{array}{ccccc}
-1 & \lambda_1-1 & \lambda_1-1 & \ldots & \lambda_1-1 \\
0 & -1 & \lambda_2-1 & \ldots & \lambda_2-1 \\
 & & & & \\
\vdots &  & \ddots &  & \vdots \\
 & & & & \\
 0 & \ldots & \phantom{ddff} 0 & \phantom{df} -1 & \lambda_{n-1}-1 \\
 0 & \ldots &  & 0 & -1\phantom{df}
\end{array}\right)
\left(\begin{array}{c}
z_1\frac{\partial}{\partial z_1} \\
\\
\\
\vdots \\
\\
\\
z_n \frac{\partial}{\partial z_n}
\end{array}\right) \label{eq:reccurent_operator_D}
\end{align}
\end{proposition}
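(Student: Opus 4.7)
The plan is to read off all three claims directly from the recurrence (\ref{eq:common_view_res_deriv}) for $P_{n,l+1}$ established inside the proof of Proposition~\ref{prop:common_view}, by specializing $\mu_{iq}$ to $\mu_{iq}^0$.

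Properties (1) and (2) come essentially for free. By Proposition~\ref{prop:common_view}, $P_{nl}$ has integer coefficients and is homogeneous of degree $l$ in $Z_i$. Since $\mu_{iq}^0=(-1)^{q-1}(q-1)!(\lambda_i-1)\in \mathbb{Z}[\lambda_i]$ contains no $z_j$, substituting $Z_i=z_i$ and $\mu_{iq}=\mu_{iq}^0$ produces a polynomial lying in $\mathbb{Z}[\lambda_1,\ldots,\lambda_n][z_1,\ldots,z_n]$ which is still homogeneous of degree $l$ in $z_i$.

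The heart of the argument is item (3). The base $l=1$ is immediate from (\ref{eq:common_view_l1}): $P_{n,1}=\sum_i \mu_{i1} Z_i$ and $\mu_{i1}^0=\lambda_i-1$, so $Q_{n,1}=\sum_i(\lambda_i-1)z_i$. For the inductive step I would take the recurrence (\ref{eq:common_view_res_deriv}),
\[
P_{n,l+1}=\sum_{\substack{1\le i\le n \\ 1\le q\le l}}(q\mu_{iq}+\mu_{i,q+1})Z_i\frac{\partial P_{nl}}{\partial \mu_{iq}}+\sum_{i=1}^n\left(-Z_i+\sum_{j<i}\mu_{j1}Z_j\right)Z_i\frac{\partial P_{nl}}{\partial Z_i},
\]
and substitute $\mu_{iq}=\mu_{iq}^0$, $Z_i=z_i$. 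The crucial step --- and the single point on which the whole proposition hinges --- is the identity
\[
q\mu_{iq}^0+\mu_{i,q+1}^0=(-1)^{q-1}q!(\lambda_i-1)+(-1)^q q!(\lambda_i-1)=0,
\]
valid for all $i,q$. This cancellation kills the entire first sum. Since partial differentiation with respect to $Z_i$ commutes with the substitution of the $\mu$-variables, $\partial P_{nl}/\partial Z_i$ evaluated at $(\mu^0,z)$ equals $\partial Q_{nl}/\partial z_i$, and only
\[
Q_{n,l+1}=\sum_{i=1}^n\left(-z_i+\sum_{j<i}(\lambda_j-1)z_j\right)z_i\frac{\partial Q_{nl}}{\partial z_i}
\]
survives.

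What remains is to verify that this equals $\mathfrak{D}_n Q_{nl}$. Reading off the matrix in (\ref{eq:reccurent_operator_D}), whose $(i,j)$-entry is $-1$ if $j=i$, $\lambda_i-1$ if $j>i$, and $0$ if $j<i$, the row-by-column expansion of (\ref{eq:reccurent_operator_D}) applied to $Q_{nl}$ yields $-\sum_i z_i^2\,\partial Q_{nl}/\partial z_i+\sum_{i<j}(\lambda_i-1)z_iz_j\,\partial Q_{nl}/\partial z_j$; swapping the names of the summation indices in the off-diagonal part reproduces the displayed formula exactly. The only real obstacle is the index bookkeeping in this matrix expansion; the genuine mathematical content is the vanishing $q\mu_{iq}^0+\mu_{i,q+1}^0=0$, which is precisely what makes the limit operator $\mathfrak{D}_n$ lower-triangular and the recurrence for $Q_{nl}$ so clean.
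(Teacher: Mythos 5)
Ваше доказательство верно и по существу совпадает с авторским: та же ключевая тождественность $q\mu_{iq}^0+\mu_{i,q+1}^0=0$ уничтожает первую сумму в рекуррентном соотношении (\ref{eq:common_view_res_deriv}), после чего оставшаяся сумма отождествляется с оператором (\ref{eq:reccurent_operator_D}). Отмечу лишь, что ваша запись $\sum_{j<i}(\lambda_j-1)z_j$ корректна и согласуется с матричной формой, тогда как в промежуточной формуле авторского доказательства стоит опечатка ($\lambda_i-1$ вместо $\lambda_j-1$); содержательной разницы в подходах нет.
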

\begin{proof}
Докажем свойство 3. Свойства 1 и 2 следуют из свойства 3 очевидным образом.

Из соотношения (\ref{eq:limit_gamma_limit}) получаем, что для любого $i=1,\ldots,n$ и любого $q = 1, \ldots, l$ выполнены равенства:
\begin{align*}
\mu_{i,1}^0 = \lambda_i-1, \qquad q\mu_{iq}^0 + \mu_{i,q+1}^0 = 0.
\end{align*}
Таким образом, формула (\ref{eq:common_view_l1}) для многочлена $P_{n,1}$ влечёт формулу (\ref{eq:reccurent_l1}), а рекуррентное соотношение (\ref{eq:common_view_res_deriv}) для многочленов $P_{nl}$ влечёт рекуррентное соотношение
\begin{align}
Q_{n,l+1}(z_1, \ldots, z_n) = \sum\limits_{i=1}^n  \big(-z_i + \sum\limits_{j=1}^{i-1}(\lambda_i-1)z_j\big)z_i \frac{\partial Q_{nl}}{\partial z_i}. \label{eq:reccurent_Q}
\end{align}
Нетрудно видеть, что выражение справа от знака равенства в формуле (\ref{eq:reccurent_Q}) есть многочлен $Q_{nl}$, к которому применили оператор (\ref{eq:reccurent_operator_D}). Предложение доказано.
\end{proof}

В силу свойства 2 на многочлены $Q_{nl}$ можно смотреть как на многочлены $Q_{nl}(\lambda, z)$ от $2n$ переменных $\lambda = \lambda_1, \ldots, \lambda_n$, $z = z_1, \ldots, z_n$.

\begin{collorary}
Для любых натуральных $n$ и $l$, а также для любого $j=1,\ldots,n$ многочлены $Q_{nl}$ обладают следующим свойством:
\begin{eqnarray}
& Q_{nl}(\lambda, z) \Big|_{z_j=0} =
Q_{nl}(\lambda, z)\Big|_{\lambda_j=1} = Q_{n-1,l}(\lambda', z'), \label{eq:link_property} &
\end{eqnarray}
где $\lambda' = \lambda_1, \ldots, \hat{\lambda}_j, \ldots, \lambda_n$ и $z' = z_1, \ldots, \hat{z}_j, \ldots, z_n$.
\end{collorary}

Здесь через $\hat{\phantom{a}}$ обозначены отсутствующие в перечислении переменные. Доказательство легко следует из формул (\ref{eq:reccurent_l1} - \ref{eq:reccurent_operator_D}).

\subsection{Предельный переход при $\delta, x \to 0$. Кратные предельные циклы}\label{sec:multiple_cycles}

Поскольку и многочлены $P_{nl}$, и многочлены $Q_{nl}$ по переменным $z_1, \ldots, z_n$ являются однородными (см. предложения \ref{prop:common_view} и \ref{prop:limit}), то можно считать, что они заданы на проективном пространстве $\mathbb{R}P^{n-1}$. Точки пространства $\mathbb{R}P^{n-1}$ будем обозначать через $z = (z_1 : \ldots : z_n)$. Рассмотрим следующую функцию:
\begin{align}
\mathcal{Z}: (\delta, x) \mapsto \big(Z_1(\delta, x) : \ldots : Z_n(\delta, x)\big), \label{eq:cycle_Z}
\end{align}
где функции $Z_i(\delta,x)$ определяются формулой (\ref{eq:limit_Zs}).

В силу предложения \ref{prop:common_view} уравнения на $n+1$-кратный предельный цикл принимают вид:
\begin{eqnarray}
& \Delta(\delta, x) = x, \label{eq:cycle_Delta}  & \\
& \Delta'(\delta, x) = 1, \label{eq:cycle_Delta_derive}  & \\
& P_{nl}(\mu_{iq}(\delta, x), z) = 0, \quad l = 1, \ldots, n-1. \label{eq:cycle_Ps} &
\end{eqnarray}

Пусть существует последовательность $(\delta_\alpha, x_\alpha) \to 0$, соответствующая предельному циклу кратности $n+1$ или более. Тогда система (\ref{eq:cycle_Delta} - \ref{eq:cycle_Ps}) имеет решение $\delta_\alpha, x_\alpha, \mathcal{Z}(\delta_\alpha, x_\alpha)$. 

\begin{proposition}\label{prop:eqv_Q}
Пусть при возмущении полицикла $\gamma$ внутри семейства $V = \{v_\delta\}$, $\delta \in B = (\mathbb{R}^k,0)$ рождается предельный цикл кратности $n+1$ или более. Пусть $\{(\delta_\alpha, x_\alpha)\}_{\alpha=1}^\infty$ --- соответствующая этому циклу последовательность в пространстве $B \times (\mathbb{R}_{>0},0)$, причём при $(\delta_\alpha, x_\alpha) \to 0$ задаваемая формулой (\ref{eq:cycle_Z}) функция $\mathcal{Z}$ на этой последовательности стремится к некоторой точке $z \in \mathbb{R}P^{n-1}$. Тогда точка $z$ удовлетворяет следующей системе уравнений:
\begin{align}
Q_{nl}(z) = 0, \quad l=1, \ldots, n-1, \label{eq:cycle_Qs}
\end{align}
где многочлены $Q_{nl}$ задаются равенствами (\ref{eq:reccurent_l1} - \ref{eq:reccurent_operator_D}).
\end{proposition}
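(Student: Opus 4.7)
Отправной точкой служат уравнения (\ref{eq:cycle_Ps}): поскольку каждое из полей $v_{\delta_\alpha}$ имеет предельный цикл кратности не менее $n+1$, пересекающий полутрансверсаль $\Gamma_n^-(\delta_\alpha)$ в точке $x_\alpha$, то для всех $\alpha$ и всех $l = 1, \ldots, n-1$ выполнено
\begin{align*}
P_{nl}\big(\mu_{iq}(\delta_\alpha, x_\alpha),\, Z_i(\delta_\alpha, x_\alpha)\big) = 0.
\end{align*}
Цель --- перейти в этих равенствах к пределу при $\alpha \to \infty$ и заменить в них $P_{nl}$ на $Q_{nl}$, опираясь на определение (\ref{eq:limit_Qs}) и пределы (\ref{eq:limit_gamma_limit}).

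Основная техническая тонкость состоит в том, что функции $Z_i(\delta, x)$ сами по себе могут не стремиться к конечным пределам (уже $Z_1 = 1/x \to +\infty$ при $x \to 0$), поэтому переход к проективным координатам существенен. Воспользуюсь однородностью $P_{nl}$ степени $l$ по переменным $Z_1, \ldots, Z_n$ (предложение \ref{prop:common_view}): для каждого $\alpha$ вектор $(Z_1(\delta_\alpha, x_\alpha), \ldots, Z_n(\delta_\alpha, x_\alpha))$ можно заменить на любой пропорциональный ему без нарушения равенств. Нормирую: выбираю представитель $(\tilde{Z}_1^\alpha, \ldots, \tilde{Z}_n^\alpha)$ той же проективной точки $\mathcal{Z}(\delta_\alpha, x_\alpha)$ с условием $\max_i |\tilde{Z}_i^\alpha| = 1$. Тогда по-прежнему $P_{nl}(\mu_{iq}(\delta_\alpha, x_\alpha), \tilde{Z}_i^\alpha) = 0$, а последовательность нормированных векторов ограничена в $\mathbb{R}^n$.

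По условию предложения $\mathcal{Z}(\delta_\alpha, x_\alpha) \to z$ в $\mathbb{R}P^{n-1}$, поэтому, переходя при необходимости к подпоследовательности, можно считать, что $(\tilde{Z}_1^\alpha, \ldots, \tilde{Z}_n^\alpha)$ сходится к некоторому представителю $(\tilde{z}_1, \ldots, \tilde{z}_n)$ точки $z$; нетривиальность этого представителя гарантируется условием $\max_i |\tilde{z}_i| = 1$. Одновременно по лемме \ref{lmm:saddle_limit} (а точнее, по формуле (\ref{eq:limit_gamma_limit})) $\mu_{iq}(\delta_\alpha, x_\alpha) \to \mu_{iq}^0$ для всех $i, q$. Непрерывность многочлена $P_{nl}$ по всем своим аргументам позволяет перейти к пределу и получить $P_{nl}(\mu_{iq}^0, \tilde{z}_i) = 0$, что по определению (\ref{eq:limit_Qs}) и означает $Q_{nl}(\tilde{z}_1, \ldots, \tilde{z}_n) = 0$. В силу однородности $Q_{nl}$ (свойство 1 предложения \ref{prop:limit}) это условие корректно задано на проективной точке $z$, что и требуется.

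Главная (по сути единственная) нестандартная часть рассуждения --- это аккуратная работа с проективными координатами, описанная выше; после нормировки доказательство сводится к формальному предельному переходу в полиномиальных равенствах с уже установленными пределами компонент.
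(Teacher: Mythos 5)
Ваше рассуждение верно и по существу совпадает с доказательством в статье: там предложение объявлено очевидным следствием равенств (\ref{eq:cycle_Ps}) и определения (\ref{eq:limit_Qs}), то есть именно тем предельным переходом, который вы проводите. Вы лишь аккуратно расписали подразумеваемые детали (нормировка представителя проективной точки, сходимость $\mu_{iq}\to\mu_{iq}^0$ по (\ref{eq:limit_gamma_limit}), однородность $P_{nl}$ и $Q_{nl}$), и все эти шаги корректны.
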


\begin{proof}
Утверждение очевидным образом следует из равенства (\ref{eq:cycle_Ps}) и определения многочленов $Q_{nl}$ (см. формулу (\ref{eq:limit_Qs})).
\end{proof}

Предложение \ref{prop:eqv_Q} можно сформулировать в более общем виде, который может оказаться полезным.
\begin{proposition}\label{prop:eqv_Q_common}
Пусть при возмущении полицикла $\gamma$ внутри семейства $V = \{v_\delta\}$, $\delta \in B = (\mathbb{R}^k,0)$ рождается предельный цикл кратности $m+2$ или более. Пусть $\{(\delta_\alpha, x_\alpha)\}_{\alpha=1}^\infty$ --- соответствующая этому циклу последовательность в пространстве $B \times (\mathbb{R}_{>0},0)$, причём при $(\delta_\alpha, x_\alpha) \to 0$ задаваемая формулой (\ref{eq:cycle_Z}) функция $\mathcal{Z}$ на этой последовательности стремится к некоторой точке $z \in \mathbb{R}P^{n-1}$. Тогда точка $z$ удовлетворяет следующей системе уравнений:
\begin{align}
Q_{nl}(z) = 0, \quad l=1, \ldots, m, \label{eq:cycle_Qs}
\end{align}
где многочлены $Q_{nl}$ задаются равенствами (\ref{eq:reccurent_l1} - \ref{eq:reccurent_operator_D}).
\end{proposition}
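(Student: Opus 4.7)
Стратегия состоит в прямом обобщении рассуждения из доказательства предложения \ref{prop:eqv_Q}: формально заменить $n-1$ на $m$ и проследить, что все шаги проходят без изменений. Наличие предельного цикла кратности $m+2$ или более в неподвижной точке $x_0 = x_0(\delta_\alpha)$ отображения Пуанкаре равносильно выполнению условий $\Delta(\delta_\alpha, x_0) = x_0$, $\Delta'(\delta_\alpha, x_0) = 1$ и $\Delta^{(l+1)}(\delta_\alpha, x_0) = 0$ для всех $l=1, \ldots, m$. Как показано в параграфе \ref{sec:Poincare}, эти условия (за исключением первого) эквивалентны системе $\mathcal{D}^{(l)}(\delta_\alpha, x_\alpha) = 0$, $l = 0, 1, \ldots, m$.

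По предложению \ref{prop:common_view} для любого $l \geq 1$ имеет место представление $\mathcal{D}^{(l)} = P_{nl}(\mu_{iq}, Z_i)$, так что вдоль выбранной последовательности справедливо $P_{nl}(\mu_{iq}(\delta_\alpha, x_\alpha), Z_i(\delta_\alpha, x_\alpha)) = 0$ при $l=1, \ldots, m$. Благодаря однородности многочленов $P_{nl}$ по переменным $Z_1, \ldots, Z_n$ каждое такое уравнение корректно задаёт условие на точку проективного пространства $\mathbb{R}P^{n-1}$. Остаётся выполнить предельный переход при $\alpha \to \infty$: по соотношению (\ref{eq:limit_gamma_limit}) имеем $\mu_{iq}(\delta_\alpha, x_\alpha) \to \mu_{iq}^0 = (-1)^{q-1}(q-1)!(\lambda_i-1)$, а проективная точка $\mathcal{Z}(\delta_\alpha, x_\alpha)$ по условию предложения стремится к $z$. В силу непрерывности многочлена $P_{nl}$ в пределе получаем $P_{nl}(\mu_{iq}^0, z) = 0$, что согласно определению (\ref{eq:limit_Qs}) означает $Q_{nl}(z) = 0$ для всех $l=1, \ldots, m$.

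Существенных препятствий при реализации этого плана не предвидится: все ключевые ингредиенты --- полиномиальное представление производных $\mathcal{D}^{(l)}$, однородность $P_{nl}$ по $Z_i$ и предельное значение $\mu_{iq}^0$ --- уже подготовлены в предыдущих параграфах. Наиболее деликатный момент --- оправдание проективного предела, однако он непосредственно обеспечивается однородностью $P_{nl}$ в сочетании с условием сходимости $\mathcal{Z}(\delta_\alpha, x_\alpha) \to z$. По сравнению с предложением \ref{prop:eqv_Q} настоящее утверждение отличается лишь диапазоном индексов $l$ и не требует никаких принципиально новых идей.
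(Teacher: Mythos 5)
Ваше рассуждение верно и по существу совпадает с авторским: в статье предложение \ref{prop:eqv_Q_common} доказывается ссылкой «аналогично предложению \ref{prop:eqv_Q}», то есть именно той цепочкой, которую вы расписали --- условия на производные $\Delta$ в кратной неподвижной точке дают $\mathcal{D}^{(l)}=0$, затем представление $\mathcal{D}^{(l)}=P_{nl}(\mu_{iq},Z_i)$ из предложения \ref{prop:common_view}, и предельный переход $\mu_{iq}\to\mu_{iq}^0$, $\mathcal{Z}(\delta_\alpha,x_\alpha)\to z$ с учётом однородности $P_{nl}$ даёт $Q_{nl}(z)=0$ при $l=1,\ldots,m$. Вы лишь излагаете подробнее (включая корректность проективного предела) то, что автор считает очевидным.
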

Доказательство аналогично доказательству предложения \ref{prop:eqv_Q}.

\begin{lemma}\label{lmm:cycle_CP}
Пусть поле $v_0$ имеет полицикл $\gamma$, образованный гиперболическими сёдлами $S_1, \ldots, S_n$, $n \geq 2$, с характеристическими числами $\lambda_1, \ldots, \lambda_n$, которые удовлетворяют неравенству $\lambda_1 \ldots \lambda_n \neq 1$. Пусть семейство $V=\{v_\delta\}$ возмущает поле $v_0$. Обозначим через $C$ множество таких пар $(\delta, x)$, что поле $v_\delta$ имеет имеет предельный цикл кратности как минимум 2, проходящий через точку с координатой $x$. Пусть $\mathfrak{Z} = \{ z \in \mathbb{R}P^{n-1} | \exists \{(\delta_\alpha, x_\alpha)\}_{\alpha=1}^\infty \subset C, \mathcal{Z}(\delta_\alpha, x_\alpha) \to z \text{ при } \alpha \to \infty\}$, где отображение $\mathcal{Z}$ задаётся формулой (\ref{eq:cycle_Z}).

Тогда $\mathfrak{Z} \subset \cup_{j=1}^n \mathbb{C}P^{n-2}_j$, где $\mathbb{C}P^{n-2}_j = \{ z = (z_1 : \ldots : z_n) \in \mathbb{R}P^{n-1} | z_j = 0 \}$.

\end{lemma}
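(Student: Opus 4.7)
The plan is to argue by contradiction: assume there is a point $z = (z_1 : \ldots : z_n) \in \mathfrak{Z}$ with every coordinate $z_j \neq 0$, and derive $\lambda_1 \cdots \lambda_n = 1$. By definition of $\mathfrak{Z}$ I take a witnessing sequence $\{(\delta_\alpha, x_\alpha)\} \subset C$ with $(\delta_\alpha, x_\alpha) \to 0$ and $\mathcal{Z}(\delta_\alpha, x_\alpha) \to z$. The multiplicity-at-least-$2$ assumption gives $F_n(\delta_\alpha, x_\alpha) = x_\alpha$ and $F_n'(\delta_\alpha, x_\alpha) = 1$, so setting $\ell_k^\alpha := \ln|F_{k-1}(\delta_\alpha, x_\alpha)|$ for $k = 1, \ldots, n$ and $\ell_{n+1}^\alpha := \ln|F_n(\delta_\alpha, x_\alpha)|$, the closure identity $\ell_{n+1}^\alpha = \ell_1^\alpha = \ln|x_\alpha| \to -\infty$ holds automatically.

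First I would compute the consecutive ratios $R_k := Z_{k+1}/Z_k$ (extending the definition by $Z_{n+1} := F_n'/F_n$). Writing $f_k = \tau_k(\delta) + \epsilon_k \Delta_k$ with $\epsilon_k \in \{\pm 1\}$, a short calculation using Trifonov's representation~\eqref{eq:trif_repr} yields
\[
R_k = \frac{F_{k-1} f_k'(F_{k-1})}{f_k(F_{k-1})} = \frac{\epsilon_k \lambda_k}{\zeta_k + \epsilon_k}\bigl(1 + o(1)\bigr), \qquad \zeta_k := \frac{\tau_k(\delta)}{\Delta_k(F_{k-1})}.
\]
At our sequence the multiplicity condition forces $Z_{n+1}(\delta_\alpha, x_\alpha) = 1/x_\alpha = Z_1(\delta_\alpha, x_\alpha)$. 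Passing to a subsequence, I may assume $\zeta_k^\alpha \to \zeta_k^* \in [-\infty, +\infty]$ for each $k$. The projective limits $R_k \to z_{k+1}/z_k$ (with the cyclic convention $z_{n+1} := z_1$) are all finite and nonzero by the hypothesis $z_j \neq 0$; this excludes both $\zeta_k^* = \infty$ and $\zeta_k^* = -\epsilon_k$, so $\zeta_k^* \in \mathbb{R}$ and $\zeta_k^* + \epsilon_k \neq 0$ for every $k$.

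The second step is a telescoping estimate. From the factorisation $F_k = \Delta_k(F_{k-1})(\zeta_k + \epsilon_k)$ together with $|\Delta_k(F_{k-1})| = |C_k(\delta)| |F_{k-1}|^{\lambda_k(\delta)}\bigl(1 + o(1)\bigr)$, taking logarithms gives
\[
\ell_{k+1}^\alpha = \lambda_k \ell_k^\alpha + \ln|C_k(\delta_\alpha)| + \ln|\zeta_k^\alpha + \epsilon_k| + o(1) = \lambda_k \ell_k^\alpha + O(1),
\]
with the $O(1)$ uniformly bounded in $\alpha$ since $C_k(\delta) \to C_k(0) > 0$ and $\zeta_k^* + \epsilon_k \neq 0$. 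Iterating over $k = 1, \ldots, n$ produces $\ell_{n+1}^\alpha = (\lambda_1 \cdots \lambda_n)\,\ell_1^\alpha + O(1)$, and combined with $\ell_{n+1}^\alpha = \ell_1^\alpha$ this gives $(\lambda_1 \cdots \lambda_n - 1)\,\ell_1^\alpha = O(1)$. Since $\ell_1^\alpha \to -\infty$ we must have $\lambda_1 \cdots \lambda_n = 1$, contradicting the hypothesis.

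The main obstacle I expect to manage carefully is precisely the uniform boundedness of the $O(1)$ correction in the telescoping step: one must keep $\zeta_k^\alpha + \epsilon_k$ bounded away from $0$ along the chosen subsequence, which is exactly what the finiteness and non-vanishing of the projective ratios $z_{k+1}/z_k$ provide. Once the two degenerate limits $\zeta_k^* = -\epsilon_k$ and $\zeta_k^* = \infty$ are ruled out, everything else is bookkeeping with Trifonov's normal form and the properties~\eqref{eq:Oo1}--\eqref{eq:Oo5} of the classes $\tilde{o}_r^\lambda$ and $\underset{\thicksim}{O}^\lambda_r$.
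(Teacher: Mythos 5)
Your argument is correct and is essentially the paper's own proof: assuming all $z_j\neq 0$, the finiteness and nonvanishing of the consecutive ratios $Z_{k+1}/Z_k$ combined with the representation $\Delta_k(x)=C_k(\delta)x^{\lambda_k(\delta)}(1+o(1))$ gives the telescoping relation $\ln|F_k| = \lambda_k(\delta_\alpha)\ln|F_{k-1}| + O(1)$, and going once around the cycle forces $\lambda_1\cdots\lambda_n=1$; the only cosmetic difference is that you close the telescope with the fixed-point equation $\Delta(x_\alpha)=x_\alpha$ and the extended ratio $Z_{n+1}=Z_1$, whereas the published proof runs a second induction on the derivatives $F_k'$ and closes with $\Delta'(x_\alpha)=1$. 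One bookkeeping caution: keep the $\delta$-dependent exponents $\lambda_k(\delta_\alpha)$ throughout and divide by $\ell_1^\alpha=\ln x_\alpha$ before letting $\alpha\to\infty$, since $(\lambda_k(\delta_\alpha)-\lambda_k)\,\ell_k^\alpha$ need not be bounded; with that adjustment (which is exactly how the paper phrases its last step) your proof is complete.
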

\begin{proof}
Предположим, что существует такая точка $z = (z_1 : \ldots : z_n) \in \mathfrak{Z}$, что для любого $i = 1, \ldots, n$ координата $z_j$ отлична от нуля. Тогда существует такая последовательность $(\delta_\alpha, x_\alpha) \to 0$, соответствующая предельному циклу кратности два или более, что на этой последовательности функция $\mathcal{Z}$ стремится к точке $z$.

Докажем индукцией по $i=0, \ldots, n$, что выполнено следующее соотношение:
\begin{align}
F_i(\delta_\alpha, x_\alpha) = x^{\lambda_1(\delta_\alpha) \ldots \lambda_i(\delta_\alpha)} * \quad \text{ при } \delta_\alpha, x_\alpha \to 0, \label{eq:cycle_ln}
\end{align}
где функции $F_i$ определяются формулой (\ref{eq:common_view_Fs}). Здесь и далее $*$ означает умножение на некоторую функцию, отделённую от нуля и бесконечности.

База индукции $i=0$ очевидна: $F_0(\delta_\alpha, x_\alpha) = x_\alpha$. Для произвольной функции $g(\delta, x)$ под $g\big|_{(\delta_\alpha, x_\alpha)}$ будем подразумевать, что функция берётся в точке $(\delta_\alpha, x_\alpha)$.

Пусть утверждение выполнено для некоторого $i$. Поскольку мы предположили, что для любого $i=1, \ldots, n$ компоненты $z_i$ точки $z$ ненулевые, то имеем $\left.\frac{Z_{i+1}}{Z_i}\right|_{(\delta_\alpha, x_\alpha)} = *$. С другой стороны, из формулы (\ref{eq:limit_Zs}) следует, что
\begin{align*}
\left.\frac{Z_{i+1}}{Z_i}\right|_{(\delta_\alpha, x_\alpha)} = \left.\frac{f_i'(F_{i-1})F_{i-1}}{F_i}\right|_{(\delta_\alpha, x_\alpha)}.
\end{align*}
Выразив из этого равенства $F_i$, приходим к соотношению:
\begin{align}
F_i\Big|_{(\delta_\alpha, x_\alpha)} = f_i'(F_{i-1}) F_{i-1}*\Big|_{(\delta_\alpha, x_\alpha)} \quad \text{ при } \delta_\alpha, x_\alpha \to 0. \label{eq:cycle_lnF}
\end{align}
Из формул (\ref{eq:saddle_maps_fs}) и (\ref{eq:Oo_ln_Delta}) следует, что
\begin{align}
f_i'(\delta, x) = \Delta_i'(\delta, x) = x^{\lambda_i(\delta) - 1} * \quad \text{ при } \delta, x \to 0. \label{eq:cycle_f_diff}
\end{align}
Подставляя это выражение в равенство (\ref{eq:cycle_lnF}), получаем рекуррентное соотношение $F_i(\delta_\alpha, x_\alpha) = F_{i-1}(\delta_\alpha, x_\alpha)^{\lambda_i(\delta_\alpha)}$, что влечёт формулу (\ref{eq:cycle_ln}).

Снова индукцией по $i=1, \ldots, n$ докажим следующую формулу:
\begin{align}
F_i'(\delta_\alpha, x_\alpha) = x^{\lambda_1(\delta_\alpha)\ldots\lambda_i(\delta_\alpha)-1}*. \label{eq:cycle_F_diff}
\end{align}
База индукции $i=1$ следует из равенства (\ref{eq:cycle_f_diff}). Также из равенства (\ref{eq:cycle_f_diff}) и доказанного соотношения (\ref{eq:cycle_ln}) получаем шаг индукции:
\begin{align*}
F_i'(\delta_\alpha, x_\alpha) = f_n'(F_{i-1}) F_{i-1}' \Big|_{(\delta_\alpha, x_\alpha)} = \left(x^{\lambda_1(\delta_\alpha)\ldots\lambda_{i-1}(\delta_\alpha)}\right)^{\lambda_i(\delta_i)-1} x^{\lambda_1(\delta_\alpha)\ldots\lambda_{i-1}(\delta_\alpha)-1}*.
\end{align*}
Поскольку для любого $\alpha$ пара $(\delta_\alpha, x_\alpha)$ соответсвует предельному циклу кратности как минимум два, то выполнено равенство $\Delta'(\delta_\alpha, x_\alpha) = F_n'(\delta_\alpha, x_\alpha) = 1$. Взяв от этого равенства логарифм и примененив формулу (\ref{eq:cycle_F_diff}), приходим к соотношению:
\begin{align*}
\ln \Delta'(\delta_\alpha, x_\alpha) = (\lambda_1(\delta_\alpha)\ldots\lambda_n(\delta_\alpha) - 1)\ln x_\alpha + O(1) = 0.
\end{align*}
После деления на $\ln x_\alpha$ и взятия предела при $\alpha \to \infty$ получаем равенство $\lambda_1 \ldots \lambda_n = 1$, что противоречит условию. Следовательно, хотя бы одна из координат точки $z$ равна нулю.

\end{proof}

\subsection{План доказательства теоремы \ref{th:main}} \label{sec:th2}
\begin{proof}
Рассмотрим случай $n=1$: полицикл образован одним седлом с характеристическим числом $\lambda_1$, то есть является петлёй сепаратрисы седла $S_1$. Пусть число $\lambda_1$ не корень многочлена $\mathcal{L}_1(\lambda_1) = \lambda_1-1$. Предположим, что при возмущении полицикла в семействе $V$ рождается двукратный предельный цикл, то есть цикл, для которого отображение Пуанкаре $\Delta(x)$ в соответствующей точке удовлетворяет уравнениям: $\Delta(x) = x$, $\Delta'(x) = 1$. В частности, из последнего равенства следует, что $\ln \Delta'(x) = 0$. Но в силу соотношения (\ref{eq:Oo_ln_Delta}) при $\lambda_1 \neq 1$ имеем $\ln \Delta'(x) \to \pm \infty$ при $\delta, x \to 0$, что приводит к противоречию. Следовательно, для $n=1$ теорема доказана.

Перейдём к случаю $n \geq 2$. Предположим, что в семействе $V$ рождается предельный цикл кратности $n+1$. Тогда предложение \ref{prop:eqv_Q} влечёт, что заданная на проективном пространстве $\mathbb{R}P^{n-1}$ система однородных уравнений (\ref{eq:cycle_Qs}) имеет хотя бы одно решение.

Более того, пусть выполнено следующее неравенство на характеристические числа:
\begin{align}
\lambda_1\ldots\lambda_n \neq 1. \label{eq:th2_lambdas_prod}
\end{align}
Тогда из леммы \ref{lmm:cycle_CP} следует, что для некоторого $j = 1, \ldots, n$ система
\begin{align}
Q_{nl}(z) = 0, \quad l = 1, \ldots, n-1 \label{eq:th2_QZ}
\end{align}
имеет решение в подпространстве $\mathbb{R}P^{n-2}_j$. Согласно свойству (\ref{eq:link_property}) для некоторого $j=1, \ldots, n$ следующая система уравнений имеет нетривиальное действительное решение:
\begin{align*}
Q_{n-1,l}(\lambda_1, \ldots, \hat{\lambda}_j, \ldots, \lambda_n,z_1,\ldots, \hat{z}_j, \ldots, z_n) = 0, \quad l = 1, \ldots, n-1.
\end{align*}
Здесь через $\hat{\phantom{a}}$ мы снова обозначили отсутствующие в перечислении переменные. Таким образом, для любых натуральных чисел $n$ и $l$ многочлен $Q_{nl}$ зависит от $2(n-1)$ переменных. Обозначим их через $\mu_1, \ldots, \mu_{n-1}$ и $w_1, \ldots, w_{n-1}$. Рассмотрим систему:
\begin{align}
Q_{n-1,l}(\mu_1, \ldots, \mu_{n-1}, w_1, \ldots, w_{n-1}) = 0, \quad l = 1, \ldots, n-1. \label{eq:Q_mu_W}
\end{align}
При фиксированных значения переменных $\mu_1, \ldots, \mu_{n-1}$ мы имеем систему из $n-1$ уравнения на $n-2$-мерном проективном пространстве. Она имеет нетривиальное (вообще говоря, комплексное) решение тогда и только тогда, когда её результант $\mathcal{R}_{n-1}(\mu_1, \ldots, \mu_{n-1})$ равен нулю \cite{Eis}.

Нетривиальность результанта системы (\ref{eq:Q_mu_W}) следует из леммы:
\begin{lemma}\label{lmm:nonull}
Для любого натурального $n\geq 2$ многочлен $\mathcal{R}_{n-1}(\mu, \ldots, \mu)$ от действительной переменной $\mu$ не равен тождественно нулю.
\end{lemma}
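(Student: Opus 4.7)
План: я буду работать в пределе $\mu \to 1$. Положим $\varepsilon = \mu - 1$ и разложим многочлены $Q_{n-1,l}(\mu,\ldots,\mu,w)$ по степеням $\varepsilon$. Идея в том, что в главном по $\varepsilon$ порядке каждый из этих многочленов пропорционален степенной сумме $p_l(w) = \sum_{i=1}^{n-1} w_i^l$, а система $\{p_l = 0\}_{l=1}^{n-1}$ в $\mathbb{C}^{n-1}$ имеет лишь тривиальное решение по тождествам Ньютона. Отсюда и будет следовать, что результант $\mathcal{R}_{n-1}(1+\varepsilon,\ldots,1+\varepsilon)$ имеет ненулевой главный по $\varepsilon$ член.

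Первый шаг --- вычислить главный член разложения. При $\lambda_i = 1+\varepsilon$ оператор (\ref{eq:reccurent_operator_D}) принимает вид $\mathfrak{D}_{n-1} = -\mathfrak{A} + \varepsilon \mathfrak{B}$, где $\mathfrak{A} = \sum_i w_i^2 \partial_{w_i}$ отвечает диагональным элементам $-1$ в матрице, а $\mathfrak{B}$ --- всем наддиагональным коэффициентам, которые теперь равны $\varepsilon$. Поскольку $Q_{n-1,1}|_{\mu=1+\varepsilon} = \varepsilon\,p_1(w)$, индукция по $l$ с использованием рекурсии (\ref{eq:reccurent_Ps}) даёт $Q_{n-1,l}|_{\mu=1+\varepsilon} = \varepsilon R_l(w) + O(\varepsilon^2)$, где $R_{l+1} = -\mathfrak{A} R_l$. Прямое вычисление $\mathfrak{A} p_l = l\,p_{l+1}$ даёт явную формулу $R_l = (-1)^{l-1}(l-1)!\,p_l$.

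Второй шаг --- проверить, что система $\{p_l(w) = 0\}_{l=1}^{n-1}$ в $\mathbb{C}^{n-1}$ имеет только нулевое решение. Это мгновенно следует из тождеств Ньютона: рекуррентно из $p_1 = \ldots = p_{n-1} = 0$ выводится $e_1 = \ldots = e_{n-1} = 0$, так что $w_1,\ldots,w_{n-1}$ суть корни многочлена $t^{n-1}$, то есть все равны нулю. Следовательно, результант Маколея $\mathrm{Res}(R_1,\ldots,R_{n-1})$ системы $n-1$ однородных многочленов степеней $1,2,\ldots,n-1$ от $n-1$ переменных отличен от нуля.

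Завершающий шаг --- применение мультиоднородности результанта Маколея. Подставляя $Q_{n-1,l}|_{\mu=1+\varepsilon} = \varepsilon(R_l + \varepsilon S_l)$ с некоторыми вспомогательными многочленами $S_l(\varepsilon, w)$, получаем
\begin{align*}
\mathcal{R}_{n-1}(1+\varepsilon,\ldots,1+\varepsilon) = \varepsilon^N \cdot \mathrm{Res}(R_1+\varepsilon S_1,\ldots,R_{n-1}+\varepsilon S_{n-1}),
\end{align*}
где $N = (n-1)!\sum_{l=1}^{n-1} 1/l$ --- сумма степеней мультиоднородности по коэффициентам отдельных многочленов. При $\varepsilon \to 0$ правая часть эквивалентна $\varepsilon^N \cdot \mathrm{Res}(R_1,\ldots,R_{n-1}) \neq 0$, откуда $\mathcal{R}_{n-1}(\mu,\ldots,\mu) \not\equiv 0$. Основная техническая трудность --- аккуратное применение формулы мультиоднородности при наличии $\varepsilon$-поправок, но это стандартное следствие непрерывности результанта по коэффициентам.
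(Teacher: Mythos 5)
Ваше доказательство корректно, и его ядро совпадает с ядром доказательства в статье: и там, и у вас ключевой шаг --- специализация $\mu \to 1$, вынесение множителя $\mu-1$ (у вас $\varepsilon$), вычисление предельной системы $\overline{Q}_{n-1,l} = (-1)^{l-1}(l-1)!\,p_l$ через вырожденный оператор $-\sum_i w_i^2\partial_{w_i}$ и ссылка на тождества Ньютона, дающие отсутствие нетривиальных комплексных решений у системы степенных сумм. Различие --- в завершающем механизме. Статья рассуждает от противного: если $\mathcal{R}_{n-1}(\mu,\ldots,\mu)\equiv 0$, то при каждом $\mu\neq 1$ есть решение $W(\mu)\in\mathbb{C}P^{n-2}$, по компактности извлекается предельная точка $W(1)$, которая по непрерывности обнуляет предельные многочлены --- противоречие; при этом из свойств результанта используется лишь базовое <<равен нулю тогда и только тогда, когда есть нетривиальное решение>>. Вы же действуете напрямую: используя мультиоднородность результанта Маколея, выносите $\varepsilon^N$ с $N=(n-1)!\sum_{l=1}^{n-1}1/l$ и замечаете, что оставшийся множитель при $\varepsilon=0$ равен $\mathrm{Res}(R_1,\ldots,R_{n-1})\neq 0$ (результант --- многочлен от коэффициентов, так что никакой дополнительной <<непрерывности>> сверх этого не требуется). Ваш вариант чуть менее элементарен в части теории результантов (нужна формула мультиоднородности), но даёт больше: он показывает, что порядок нуля многочлена $\mathcal{R}_{n-1}(\mu,\ldots,\mu)$ в точке $\mu=1$ в точности равен $N$, тогда как компактностный аргумент статьи даёт лишь нетривиальность. Единственное место, которое стоит проговорить аккуратнее, --- что $\mathcal{R}_{n-1}$ понимается именно как результант Маколея от специализированных коэффициентов, так что тождество $\mathrm{Res}(\varepsilon F_1,\ldots,\varepsilon F_{n-1})=\varepsilon^N\mathrm{Res}(F_1,\ldots,F_{n-1})$ применимо при подстановке $\mu_i=1+\varepsilon$; это действительно так, поскольку мультиоднородность --- тождество в кольце многочленов от коэффициентов и сохраняется при любой специализации.
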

Данная лемма будет доказана в параграфе \ref{sec:nonull}. Рассмотрим следующий многочлен:
\begin{align}
\mathcal{L}_n(\lambda_1, \ldots, \lambda_n) = (\lambda_1\ldots\lambda_n-1)\prod\limits_{j=1}^n \mathcal{R}_{n-1}(\lambda_1, \ldots, \hat{\lambda}_j, \ldots, \lambda_n). \label{eq:main_Q}
\end{align}
Пусть характеристические числа $\lambda_1, \ldots, \lambda_n$ таковы, что значение многочлена $\mathcal{L}_n(\lambda_1, \ldots, \lambda_n)$ не равно нулю. Тогда выполнено необходимое для леммы \ref{lmm:cycle_CP} неравенство (\ref{eq:th2_lambdas_prod}) и для любого $j = 1, \ldots, n$ система (\ref{eq:th2_QZ}) не имеет решения. Значит, предельного цикла кратности $n+1$ в семействе $V$ родиться не может. Приходим к противоречию. Следовательно, многочлен $\mathcal{L}_n$ искомый.

Для завершения доказательства осталось проверить, что неравенство $\mathcal{L}_n(\lambda_1, \ldots, \lambda_n)\neq 0$ является условием типичности, накладываемое на исходное поле $v_0$.

По условию невозмущённый полицикл $\gamma$ исходного векторного поля $v_0$ образован сёдлами $S_1, \ldots, S_n$ с характеристическими числами $\lambda_1, \ldots, \lambda_n$ соответственно, причём некоторые из сёдел могут совпадать. Если никакие два седла не совпадают, то все характеристические числа являются независимыми величинами, принимающими произвольные положительные значения. Поскольку из леммы \ref{lmm:nonull} следует, что многочлен $\mathcal{L}_n$ не тождественно равен нулю, то множество значений характеристических чисел, задаваемых неравенством (\ref{eq:L_n_ineq}), открыто и всюду плотно в $\mathbb{R}_{>0}^n$.

Предположим, что некоторые из сёдел совпали. Значит, совпали и их характеристические числа. Но в силу леммы \ref{lmm:nonull} снова получаем, что результант $\mathcal{R}_{n-1}$ системы (\ref{eq:th2_QZ}) при отождествлении некоторых характеристических чисел тоже остаётся нетривиальным. Следовательно, неравенство (\ref{eq:L_n_ineq}) задаёт условие типичности.

Таким образом, без учёта леммы \ref{lmm:nonull} теорема \ref{th:main} доказана.
\end{proof}

\section{Нетривиальность результанта}\label{sec:nonull}
В этом параграфе мы докажем лемму \ref{lmm:nonull}, что завершит доказательство теоремы \ref{th:main}.
\begin{proof}
От противного. Пусть многочлен $\mathcal{R}_{n-1}(\mu, \ldots, \mu)$ тождественно равен нулю. Тогда в силу равенства (\ref{eq:reccurent_l1}) многочлен $Q_{n-1,1} = \\ = Q_{1,n-1}(w_1, \ldots, w_{n-1})$ имеет вид:
\begin{align*}
Q_{n-1,1}(w_1, \ldots, w_{n-1}) = (\mu - 1) (w_1 + \ldots + w_{n-1}).
\end{align*}
Так как порождающий многочлены $Q_{n-1,l}$ оператор $\mathfrak{D}_{n-1}$ линеен (см. формулу (\ref{eq:reccurent_operator_D})), то каждый многочлен $P_{n-1,l}$ можно сократить на множитель $\mu-1$.

Устремим $\mu$ к единице. Тогда согласно формуле (\ref{eq:reccurent_operator_D}) оператор $\mathfrak{D}_{n-1} = \mathfrak{D}_{n-1}(\mu, w_1, \ldots, w_{n-1})$ в пределе перейдёт в оператор $\overline{\mathfrak{D}}_{n-1} = - ( w_1^2\frac{\partial}{\partial w_1} + \ldots + w_{n-1}^2\frac{\partial}{\partial w_{n-1}})$. Следовательно, после деления на $\mu-1$ и перехода к пределу $\mu \to 1$ многочлены $\frac{1}{\mu-1}Q_{n-1,l}$ перейдут в следующие многочлены:
\begin{eqnarray*}
& \overline{Q}_{n-1,1} = \frac{1}{\mu-1} Q_{n-1,1} = w_1 + \ldots + w_{n-1} & \\
& \overline{Q}_{n-1,l} = \overline{\mathfrak{D}}_{n-1}^{l-1} \overline{Q}_{n-1,1} = \overline{\mathfrak{D}}_{n-1}^{l-1} (w_1 +\ldots + w_{n-1}) = & \\
& = (-1)^{l-1} (l-1)! (w_1^l + \ldots + w_{n-1}^l), \quad l = 1, \ldots, n-1. &
\end{eqnarray*}

Из нашего предположения и свойства результанта следует, что для любого $\mu \neq 1$ система уравнений (\ref{eq:Q_mu_W}) имеет на пространстве $\mathbb{C}P^{n-2}$ хотя бы одно решение, которое мы обозначим через $W(\mu)$. Поскольку проективное пространство $\mathbb{C}P^{n-2}$ --- компакт, то существует (не обязательно единственная) предельная точка $W(1) \in \mathbb{C}P^{n-2}$, к которой накапливаются точки $W(\mu)$ при $\mu \to 1$.

Так как многочлены $\frac{1}{\mu-1} Q_{n-1,l}$ непрерывно зависят от параметра $\mu$, то для любого $l=1,\ldots,n-1$ многочлен $\overline{Q}_{nl}$ зануляется в точке $W(1)$. Таким образом, чтобы прийти к противоречию, нам достаточно показать, что система из симметрических многочленов
\begin{align}
p_l(w_1, \ldots, w_{n-1}) = w_1^l + \ldots + w_{n-1}^l = 0, \quad l=1, \ldots, n-1, \label{eq:nonull_ps}
\end{align}
не имеет решения в пространстве $\mathbb{C}P^{n-2}$. Отсутствие нетривиальных комплексных решений у системы (\ref{eq:nonull_ps}) --- хорошо известный факт. Докажем его для полноты.

Рассмотрим следующие симметрические многочлены:
\begin{eqnarray}
& \sigma_0 (w_1, \ldots, w_{n-1}) = 1; \nonumber & \\
& \sigma_l (w_1, \ldots, w_{n-1}) = \sum\limits_{1\leq i_1 < \ldots < i_l\leq n-1} w_{i_1} \ldots w_{i_l}, \quad l=1, \ldots, n-1. & \label{eq:nonull_es}
\end{eqnarray}
Как известно, симметрические многочлены выражаются друг через друга. В частности, многочлены $p_l$ и $\sigma_l$ связывает тождество Ньютона (\cite{Pr}, параграф 11.1):
\begin{align*}
l \sigma_l = \sum \limits_{i=1}^l (-1)^{i-1} \sigma_{l-i} \; p_i, \quad l=1, \ldots, n-1.
\end{align*}
Отcюда в силу соотношения (\ref{eq:nonull_ps}) приходим к системе:
\begin{align}
\sigma_l = 0, \quad l=1, \ldots, n-1. \label{eq:nonull_e_system}
\end{align}
Заметим, что если бы у системы (\ref{eq:nonull_e_system}) имелось хотя бы одно нетривиальное решение $(w_1, \ldots, w_{n-1})$, то согласно теореме Виета многочлен
\begin{align*}
\prod\limits_{l=1}^{n-1}(w-w_l) = \sum\limits_{l=0}^{n-1} (-1)^l \sigma_l w^{n-l-1} = w^{n-1}
\end{align*}
имел бы хотя бы один отличный от нуля корень. Но это неверно, что приводит нас к противоречию. Следовательно, многочлен $\mathcal{R}_{n-1}(\mu, \ldots, \mu)$ нетривиален.

\end{proof}

Доказательство теоремы \ref{th:main} немедленно следует из этой леммы (см. параграф \ref{sec:th2}).

\section{Доказательство теоремы \ref{th:small}}\label{sec:small}
\begin{proof} \textbf{Случай} $n=1$ рассмотрен в доказательстве теоремы \ref{th:main}. Имеем:
\begin{align*}
\mathcal{L}_1(\lambda_1) = \lambda_1-1 = \Lambda_1(\lambda_1).
\end{align*}

При $n=2$, $3$ и $4$ план доказательства следующий. Сначала находим результант $\mathcal{R}_{n-1}$, непосредственно решая систему (\ref{eq:Q_mu_W}). Искомый многочлен $\mathcal{L}_n$ будет выражаться через результант $\mathcal{R}_{n-1}$ посредством формулы (\ref{eq:main_Q}).

\paragraph{Случай $n=2$.} Согласно равенству (\ref{eq:reccurent_l1}) в этом случае система (\ref{eq:Q_mu_W}) состоит из одного уравнения $(\mu_1 - 1)w_1 = 0$, которое имеет решение $w_1 \neq 0$ тогда и только тогда, когда равен нулю многочлен
\begin{align}
\mathcal{R}_1(\mu_1) = \mu_1 - 1. \label{eq:small_P1}
\end{align}
Из формулы (\ref{eq:main_Q}) имеем:
\begin{align*}
\mathcal{L}_2(\mu_1, \mu_2) = (\mu_1\mu_2-1)(\mu_1-1)(\mu_2-1) = \Lambda_1(\mu_1, \mu_2).
\end{align*}

\paragraph{Случай $n=3$.} Воспользовавшись рекуррентным соотношением (\ref{eq:reccurent_Ps}), получаем, что система (\ref{eq:Q_mu_W}) в этому случае имеет вид:
\begin{eqnarray}
& Q_{21}(w_1,w_2) = (\mu_1-1)w_1 + (\mu_2-1)w_2 = 0 \label{eq:small_n2_1} & \\
& Q_{22}(w_1,w_2) = -(\mu_1-1)w_1^2 + (\mu_1-1)(\mu_2-1)w_1w_2 - (\mu_2-1)w_2^2 = 0 \label{eq:small_n2_2} &.
\end{eqnarray}
Рассмотрим следующую линейную комбинацию:
\begin{align}
Q_{22}(w_1, w_2) + (w_1 + w_2)Q_{21}(w_1, w_2) = (\mu_1\mu_2 - 1)w_1w_2 = 0. \label{eq:small_n2_3}
\end{align}
Очевидно, что система (\ref{eq:small_n2_1}, \ref{eq:small_n2_3}) эквивалентна системе (\ref{eq:small_n2_1} - \ref{eq:small_n2_2}). Заметим, что система (\ref{eq:small_n2_1}, \ref{eq:small_n2_3}) имеет нетривиальное решение тогда и только тогда, когда равен нулю многочлен
\begin{align}
\mathcal{R}_2(\mu_1, \mu_2) = (\mu_1\mu_2-1)(\mu_1-1)(\mu_2-1). \label{eq:small_P2}
\end{align}
Из формулы (\ref{eq:main_Q}) находим:
\begin{align*}
\mathcal{L}_3(\mu_1, \mu_2, \mu_3) = (\mu_1\mu_2\mu_3-1)(\mu_1\mu_2-1)(\mu_1\mu_3-1)(\mu_2\mu_3-1)\cdot \\ \cdot(\mu_1-1)(\mu_2-1)(\mu_3-1) = \Lambda_3(\mu_1, \mu_2, \mu_3).
\end{align*}

\paragraph{Случай $n=4$.} Для удобства чтения мы будем опускать некоторые выкладки. Читателю не составит труда проверить их самостоятельно. Рассмотрим следующие три многочлена:
\begin{eqnarray}
& \tilde{Q}_{31}(w_1,w_2,w_3) = Q_{31}(w_1,w_2,w_3) = & \nonumber \\
& = (\mu_1-1)w_1 + (\mu_2-1)w_2 + (\mu_3-1)w_3 \label{eq:small_n3_1} & \\
& \nonumber & \\
& \tilde{Q}_{32}(w_1,w_2,w_3) = Q_{32}(w_1,w_2,w_3) + (w_1+w_2+w_3) Q_{31}(w_1,w_2,w_3) = \nonumber & \\
& = (\mu_1 \mu_2 -1) w_1 w_2 + (\mu_1 \mu_3 -1) w_1 w_3 + (\mu_2 \mu_3 -1) w_2 w_3 \label{eq:small_n3_2} & \\
& \nonumber & \\
& \tilde{Q}_{33}(w_1,w_2,w_3) = \mathcal{D}_3 \tilde{Q}_{32}(w_1,w_2,w_3) = \nonumber & \\
& = w_1\Big(-w_1+w_2(\mu_2-1)+w_3(\mu_3-1)\Big) (\mu_1\mu_2w_2+ \mu_1\mu_3w_3-w_2-w_3) - \nonumber & \\
& - w_2\Big(w_2-w_3(\mu_3-1)(\mu_1\mu_2 w_1 +\mu_2 \mu_3 w_3 -w_1 - w_3)\Big) + \label{eq:small_n3_3} & \\
& + w_3^2(-\mu_1\mu_3 w_1 - \mu_2\mu_3w_2 + w_1 + w_2) \nonumber &
\end{eqnarray}
Нетрудно видеть, что многочлены $\tilde{Q}_{31}$, $\tilde{Q}_{32}$ и $\tilde{Q}_{33}$ в кольце $\mathbb{Z}[\mu_1,\mu_2, \mu_3, w_1,w_2,w_3]$ порождают тот же идеал, что и многочлены $Q_{31}$, $Q_{32}$ и $Q_{33}$, задаваемые рекуррентной формулой (\ref{eq:limit_Qs}). Найдём, при каких $\mu_1$, $\mu_2$, $\mu_3$ система, образованная многочленами (\ref{eq:small_n3_1} - \ref{eq:small_n3_3}) имеет нетривиальное решение. Сделаем следующее преобразование:

\begin{eqnarray}
& \tilde{Q}_{33} - \Big(-w_1+(\mu_3-2)w_2 + (2\mu_3-3)w_3\Big)\tilde{Q}_{32} + (\mu_2\mu_3-1)\tilde{Q}_{31} = \nonumber & \\
& = w_1 w_3 \Big(w_2(\mu_1\mu_2\mu_3 + \mu_1\mu_2 + \mu_1\mu_3 - \mu_1-2) - w_3(\mu_1\mu_3-1)(\mu_3-1)\Big) = \nonumber & \\
& = w_1 w_3 L(w_2, w_3). \label{eq:small_n3_big} &
\end{eqnarray}

\begin{remark}\label{rem:Z_not_0}
Можем считать, что все три переменные $w_1$, $w_2$ и $w_3$ не равны нулю. Действительно, если какая-то из этих переменных равна нулю, то в силу свойства (\ref{eq:link_property}) исходная система из трёх уравнений $Q_{31}$, $Q_{32}$ и $Q_{33}$ превращается систему из двух многочленов $Q_{21}$ и $Q_{22}$ от двух других переменных. Следовательно, чтобы найти результант $\mathcal{R}_3$, ответ, полученный из предположения о неравенстве нулю указанных переменных, в конечном счёте следует домножить на многочлены $\mathcal{R}_2$ от всевозможных пар переменных $\mu_1, \mu_2, \mu_3$.
\end{remark}

Рассмотрим систему, образованную многочленами $\tilde{P}_{31}$, $\tilde{P}_{32}$ и $L$, приравненными к нулю. Пользуясь линейностью многочлена $L$, исключим из полученной системы переменную $w_3$. Приходим к следующей системе из двух уравнений:
\begin{eqnarray}
& w_1(\mu_1-1)(\mu_3-1)(\mu_1\mu_3-1) + \nonumber & \\
& + w_2(\mu_3-1)(2\mu_1\mu_2\mu_3 + \mu_1\mu_2 - \mu_1 -\mu_2-1) = 0 & \\
& \nonumber & \\
& w_2 \Big(w_1(\mu_1\mu_3-1)(2\mu_1\mu_2\mu_3 + \mu_1\mu_3 -\mu_1-\mu_3-1) + \nonumber & \\
& + w_2(\mu_2\mu_3-1)(\mu_1\mu_2\mu_3+\mu_1\mu_2 + \mu_1\mu_3 - \mu_1 -2)\Big) = 0 &
\end{eqnarray}

С учётом замечания \ref{rem:Z_not_0} разделим второе уравнение на $w_2$, перейдя таким образом к линейной системе. Полученная линейная система имеет нетривиальное решение тогда и только тогда, когда её определитель равен нулю. Выпишем этот определитель:

\begin{align*}
(\mu_3-1)(\mu_1\mu_3-1)(\mu_1\mu_2\mu_3-1)
\Big(4(\mu_1\mu_2\mu_3-1) - (\mu_1-1)(\mu_2-1)(\mu_3-1)\Big)
\end{align*}

В силу замечания \ref{rem:Z_not_0} данный многочлен следует домножить на результанты $\mathcal{R}_2(\mu_1, \mu_2)$, $\mathcal{R}_2(\mu_1, \mu_3)$ и $\mathcal{R}_2(\mu_2, \mu_3)$, найденные ранее (формула (\ref{eq:small_P2})). Полученный многочлен обозначим через $R^*$. Если исходная система, составленная из многочленов $Q_{31}$, $Q_{32}$ и $Q_{33}$ имеет нетривиальное решение при некоторых значениях $\mu_1,\mu_2,\mu_3$, то многочлен $R^*$ равен нулю. Следовательно, его множество нулей содержит в качестве подмножества нули искомого результанта $\mathcal{R}_3$.

На самом деле, результант $\mathcal{R}_3$, как показывают вычисления на компьютере, имеет вид:
\begin{align}
\mathcal{R}_3(\mu_1, \mu_2,\mu_3) = (\mu_1\mu_2\mu_3 - 1)(\mu_1\mu_2-1)(\mu_1\mu_3-1)(\mu_2\mu_3-1)\cdot \nonumber \\
\cdot (\mu_1-1)(\mu_2-1)(\mu_3-1)\Big(4(\mu_1\mu_2\mu_3-1)-(\mu_1-1)(\mu_2-1)(\mu_3-1)\Big). \label{eq:small_P3}
\end{align}
Но поскольку, как нетрудно убедиться, множества нулей многочленов $R^*$ и (\ref{eq:small_P3}) совпадают, то далее имеем дело с многочленом (\ref{eq:small_P3}).

Подставляем многочлен $\mathcal{R}_3$ в формулу (\ref{eq:main_Q}). Получаем:
\begin{align*}
\mathcal{L}_4(\mu_1,\mu_2,\mu_3,\mu_4) = \Lambda_4(\mu_1,\mu_2,\mu_3,\mu_4) \cdot \\
 \cdot \Big(4(\mu_1\mu_2\mu_3-1)-(\mu_1-1)(\mu_2-1)(\mu_3-1)\Big) \cdot \\
 \cdot \Big(4(\mu_1\mu_2\mu_4-1)-(\mu_1-1)(\mu_2-1)(\mu_4-1)\Big) \cdot \\
 \cdot \Big(4(\mu_1\mu_3\mu_4-1)-(\mu_1-1)(\mu_3-1)(\mu_4-1)\Big) \cdot \\
 \cdot \Big(4(\mu_2\mu_3\mu_4-1)-(\mu_2-1)(\mu_3-1)(\mu_4-1)\Big),
\end{align*}
что совпадает с утверждением теоремы.
\end{proof}

\section{Кратные неподвижные точки на действительной прямой}
Заметим, что на протяжении большей части статьи мы не апеллировали к тому факту, что функция $\Delta$ есть отображение Пуанкаре некоторого полицикла. По сути мы искали неподвижные точки функции определённого вида, заданной на некотором интервале. Это позволяет переформулировать результат в терминах функций на действительной прямой.

Пусть $f_i: \mathbb{R}_{>0} \to \mathbb{R}$, $i=1, \ldots, n$, --- $C^r$-гладкие функции на действительной полуоси, $r\geq n$. Полагаем, что функции $f_i$ непрерывно зависят от параметра $\delta$, пробегающего произвольное топологическое пространство $B$ с некоторой фиксированной точкой $0$. Пусть существуют такие положительные числа $\lambda_1, \ldots, \lambda_n$, что имеют место следующие пределы:
\begin{align*}
\lim\limits_{\delta, x \to 0} f_i(x) = 0;
\end{align*}
\begin{align}
\lim\limits_{\delta, x \to 0} x^q \frac{\partial^q}{\partial x^q}\ln|f_i'(x)| = (-1)^{q-1}(q-1)!(\lambda_i-1), \quad q = 1, \ldots, r-1. \label{eq:line_limits}
\end{align}
Другими словами, функции $f_i$ вместе со своими первыми $r$ производными ведут себя как степенные функции с показателем $\lambda_i$ (подробнее см. параграф \ref{sec:saddle_limit}).

По аналогии с формулами (\ref{eq:common_view_Fs}), (\ref{eq:limit_Zs}) и (\ref{eq:cycle_Z}) введём обозначения:
\begin{align*}
F_i = f_i \circ \ldots \circ f_0, \quad f_0 = \mathrm{id}, \quad i=0, \ldots, n; 
\end{align*}
\begin{align*}
Z_i = \frac{F_{i-1}'}{F_{i-1}}, \quad i=1,\ldots,n.
\end{align*}
\begin{align*}
\mathcal{Z}: (\delta, x) \mapsto \big(Z_1(\delta, x) : \ldots : Z_n(\delta, x)\big), 
\end{align*}

Рассмотрим следующую функцию:
\begin{align*}
\Delta(x) = f_n \circ \ldots \circ f_1(x).
\end{align*}
Тогда справедливы следующие теоремы:
\begin{theorem}\label{th:line_main}
Существует такой ненулевой многочлен $\mathcal{L}_n \in $ $ \mathbb{Z}[\lambda_1, \ldots, \lambda_n]$, что для любых чисел $\lambda_1, \ldots, \lambda_n$, удовлетворяющих неравенству $\mathcal{L}_n(\lambda_1, \ldots, \lambda_n) \neq 0$, кратность любой близкой к нулю (при $\delta \to 0$) неподвижной точки отображения $\Delta$ не превосходит $n$.
\end{theorem}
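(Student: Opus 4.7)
План доказательства данной теоремы практически дословно повторяет доказательство теоремы \ref{th:main}. Принципиально важно, что на протяжении всего раздела 2 геометрическая природа отображений $f_i$ (их происхождение от отображений соответствия гиперболических сёдел) использовалась лишь в одном месте --- в лемме \ref{lmm:saddle_limit}. Именно там из представления Трифонова (\ref{eq:trif_repr}) выводилось в точности то асимптотическое соотношение, которое в условиях теоремы \ref{th:line_main} заложено непосредственно в предположение (\ref{eq:line_limits}). Поэтому план состоит в поэтапной проверке того, что все оставшиеся шаги раздела 2 и параграфа \ref{sec:th2} переносятся в абстрактный контекст без содержательных изменений.

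Сначала следует заметить, что предложение \ref{prop:common_view} носит чисто формальный характер: многочлены $P_{nl}$ получаются применением цепного правила к равенству $\mathcal{D} = \sum_{i=1}^n \ln|f_i'(F_{i-1})|$, и их построение никак не опирается на структуру функций $f_i$. Введя, как и ранее, величины $\mu_{iq}$ и $Z_i$ по формулам (\ref{eq:common_view_gamma}) и (\ref{eq:limit_Zs}), из соотношения (\ref{eq:line_limits}) немедленно получаем $\lim_{\delta,x\to 0}\mu_{iq}(\delta,x) = (-1)^{q-1}(q-1)!(\lambda_i-1)$. Тем самым многочлены $Q_{nl}$, построенные в предложении \ref{prop:limit}, имеют ровно тот же вид и удовлетворяют тем же рекуррентным соотношениям (\ref{eq:reccurent_l1}) и (\ref{eq:reccurent_Ps}).

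Далее следует рассуждать от противного: предположить существование последовательности $(\delta_\alpha, x_\alpha)\to 0$, отвечающей неподвижной точке отображения $\Delta$ кратности $n+1$ или более. Тогда в этих точках выполнены равенства $\mathcal{D}^{(l)} = 0$ для $l=0, \ldots, n-1$, откуда в пределе получается нетривиальное решение системы $Q_{nl}(z) = 0$, $l = 1, \ldots, n-1$, в пространстве $\mathbb{R}P^{n-1}$. Главный технический момент, требующий проверки, --- обобщение леммы \ref{lmm:cycle_CP} на абстрактный случай. Её доказательство опирается на асимптотику $f_i'(\delta, x) = x^{\lambda_i(\delta) - 1}*$, где $*$ --- множитель, отделённый от нуля и бесконечности; эта асимптотика есть интегрированная форма случая $q=1$ в (\ref{eq:line_limits}), а дальнейшая индукция по $i$, выводящая оценки $F_i = x^{\lambda_1(\delta)\ldots\lambda_i(\delta)}*$ и $F_i' = x^{\lambda_1(\delta)\ldots\lambda_i(\delta)-1}*$, чисто алгебраическая и переносится без изменений.

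Завершающий шаг --- определение многочлена $\mathcal{L}_n$ формулой (\ref{eq:main_Q}) через результант $\mathcal{R}_{n-1}$ системы $Q_{n-1,l} = 0$. Нетривиальность $\mathcal{L}_n$ обеспечена леммой \ref{lmm:nonull}. При $\mathcal{L}_n(\lambda_1,\ldots,\lambda_n)\neq 0$ выполнено условие $\lambda_1\ldots\lambda_n\neq 1$, и из обобщённой леммы \ref{lmm:cycle_CP} следует, что решение системы $Q_{nl}(z) = 0$ могло бы лежать лишь в одном из подпространств $\mathbb{R}P^{n-2}_j$; но тогда соответствующий результант $\mathcal{R}_{n-1}$ обращался бы в ноль --- противоречие. Основное препятствие в этом плане --- именно аккуратный перенос леммы \ref{lmm:cycle_CP}, но, как отмечено выше, все нужные для неё асимптотические свойства функций $f_i$ и $f_i'$ следуют непосредственно из предположения (\ref{eq:line_limits}).
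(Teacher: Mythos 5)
Ваше рассуждение верно и по существу совпадает с доказательством в статье: теорема \ref{th:line_main} там обосновывается именно дословным повторением доказательства теоремы \ref{th:main} (включая предложения \ref{prop:common_view}, \ref{prop:limit}, аналог леммы \ref{lmm:cycle_CP} и лемму \ref{lmm:nonull}), где лемма \ref{lmm:saddle_limit} заменяется условием (\ref{eq:line_limits}). Единственная мелкая неточность: интегрирование случая $q=1$ в (\ref{eq:line_limits}) даёт лишь $\ln|f_i'(x)| = (\lambda_i-1)\ln x + o(|\ln x|)$, то есть множитель вида $x^{o(1)}$, а не функцию, отделённую от нуля и бесконечности, однако для заключительного деления на $\ln x_\alpha$ в аналоге леммы \ref{lmm:cycle_CP} этой более слабой оценки достаточно.
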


\begin{theorem}
Пусть $\mathcal{F}$ --- множество всех пар $(\delta, x)$, соответствующих неподвижным точкам функции $\Delta$ кратности $m+2\leq r$. Тогда любая предельная при $\delta, x \to 0$ точка $z$ функции $\mathcal{Z}\big|_{\mathcal{F}}$ удовлетворяет системе:
\begin{align*}
Q_{nl}(z) = 0, \quad l=1, \ldots, m,
\end{align*}
где многочлены $Q_{nl}$ задаются формулами (\ref{eq:reccurent_l1} - \ref{eq:reccurent_operator_D}).
\end{theorem}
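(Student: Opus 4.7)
Мой план доказательства состоит в прямом обобщении доказательства предложения \ref{prop:eqv_Q_common}. Ключевое наблюдение: все выкладки, приводящие к многочленам $P_{nl}$ (предложение \ref{prop:common_view}), и предельный переход $\mu_{iq}(\delta, x) \to \mu_{iq}^0$ (лемма \ref{lmm:saddle_limit}) не опираются ни на какие особые свойства отображений соответствия гиперболических сёдел, а используют лишь формальную структуру композиции функций и указанные в условии асимптотические соотношения. Поэтому вся машина многочленов $P_{nl}$ и $Q_{nl}$ переносится в рассматриваемый абстрактный контекст практически без изменений.

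Первый шаг --- замечу, что предложение \ref{prop:common_view} дословно справедливо в нашей ситуации: его доказательство представляет собой чисто формальную индукцию по $l$, использующую лишь цепное правило и определения величин $F_i$, $Z_i$, $\mu_{iq}$, $\mathcal{D}$. Второй шаг --- индукцией по $i$ из условия $\lim_{\delta, x \to 0} f_i(x) = 0$ получу, что $F_i(\delta_\alpha, x_\alpha) \to 0$ для любой последовательности $(\delta_\alpha, x_\alpha) \to 0$. Подставляя $y = F_{i-1}(\delta_\alpha, x_\alpha)$ в соотношение (\ref{eq:line_limits}), я получу предел $\mu_{iq}(\delta_\alpha, x_\alpha) \to (-1)^{q-1}(q-1)!(\lambda_i - 1) = \mu_{iq}^0$ для всех $q = 1, \ldots, m$ (что корректно в силу ограничения $m + 2 \leq r$).

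Третий шаг --- наложить условия кратности и собрать всё вместе. Неподвижная точка $x_\alpha$ отображения $\Delta(\delta_\alpha, \cdot)$ кратности $m+2$ удовлетворяет равенствам $\Delta'(\delta_\alpha, x_\alpha) = 1$ и $\Delta^{(l)}(\delta_\alpha, x_\alpha) = 0$ для $l = 2, \ldots, m+1$. Тейлоровское разложение даёт $\Delta'(x) = 1 + O((x - x_\alpha)^{m+1})$, откуда $\mathcal{D}^{(l)}(\delta_\alpha, x_\alpha) = 0$ для $l = 0, \ldots, m$ (функция $\mathcal{D} = \ln \Delta'$ корректно определена вблизи $x_\alpha$, так как $\Delta'(x_\alpha) = 1 > 0$). По предложению \ref{prop:common_view}, для $l = 1, \ldots, m$ эти равенства принимают вид $P_{nl}(\mu_{iq}(\delta_\alpha, x_\alpha), Z_i(\delta_\alpha, x_\alpha)) = 0$. Благодаря однородности $P_{nl}$ по переменным $Z_i$, уравнения осмысленны на проективном пространстве $\mathbb{R}P^{n-1}$. Переходя к пределу при $\alpha \to \infty$ и пользуясь предельными соотношениями $\mu_{iq} \to \mu_{iq}^0$ (шаг 2) и $\mathcal{Z}(\delta_\alpha, x_\alpha) \to z$ (условие теоремы), получаю $Q_{nl}(z) = P_{nl}(\mu_{iq}^0, z) = 0$ для всех $l = 1, \ldots, m$, что и требовалось.

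Главная (и по сути единственная) трудность --- аккуратно обосновать, что все используемые ранее утверждения, прежде всего предложение \ref{prop:common_view}, сохраняются в более общем контексте функций на полупрямой. Эта работа, однако, сводится к непосредственной проверке того, что доказательства соответствующих утверждений опираются лишь на формальные и асимптотические свойства, выделенные в условии теоремы, а не на геометрическую природу исходной задачи о возмущении полицикла.
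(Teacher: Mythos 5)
Ваше доказательство верно и по существу совпадает с доказательством из статьи: там эта теорема доказывается дословным повторением доказательства предложения \ref{prop:eqv_Q_common} (а значит, и всей цепочки через предложение \ref{prop:common_view}), с единственной заменой леммы \ref{lmm:saddle_limit} на заданное в условии соотношение (\ref{eq:line_limits}) --- именно то, что вы аккуратно расписали по шагам.
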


\begin{theorem}
При $n=1,2,3,4$ следующие многочлены  удовлетворяют требованиям теоремы \ref{th:line_main}:
\begin{enumerate}
\item $\mathcal{L}_1(\lambda_1) = \Lambda_1(\lambda_1)$;
\item $\mathcal{L}_2(\lambda_1, \lambda_2) = \Lambda_2(\lambda_1, \lambda_2)$;
\item $\mathcal{L}_3(\lambda_1, \lambda_2, \lambda_3) = \Lambda_3(\lambda_1, \lambda_2, \lambda_3)$;
\item $\mathcal{L}_4(\lambda_1, \lambda_2, \lambda_3, \lambda_4) = \Lambda_4(\lambda_1, \lambda_2, \lambda_3, \lambda_4)\cdot $

$\cdot M(\lambda_1,\lambda_2,\lambda_3) M(\lambda_1,\lambda_2,\lambda_4) M(\lambda_1,\lambda_3,\lambda_4) M(\lambda_2,\lambda_3,\lambda_4)$,
\end{enumerate}
где многочлены $\Lambda_n$, $M$ те же, что и в теореме \ref{th:small}.
\end{theorem}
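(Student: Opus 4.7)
The plan is to observe that the proof of the preceding Theorem \ref{th:small}, carried out in Section \ref{sec:small}, never actually invokes the dynamical origin of $\Delta$ as a Poincaré map of a polycycle. It rests on three ingredients: (i) the formal recurrence $\mathcal{D}^{(l)} = P_{nl}(\mu_{iq}, Z_i)$ of Proposition \ref{prop:common_view}, which follows purely from the chain rule applied to $\Delta = f_n \circ \ldots \circ f_1$; (ii) the limiting values $\mu_{iq}^0 = (-1)^{q-1}(q-1)!(\lambda_i - 1)$ of Lemma \ref{lmm:saddle_limit}, which in the present setting are \emph{postulated} as hypothesis (\ref{eq:line_limits}); and (iii) Lemma \ref{lmm:cycle_CP}, whose proof uses only the asymptotic $f_i'(\delta, x) = x^{\lambda_i(\delta) - 1} \cdot *$, itself a consequence of the case $q = 1$ of (\ref{eq:line_limits}).

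First I would verify that every object introduced in Section 2 is well defined in the abstract setting: the iterated compositions $F_i$, the ratios $Z_i = F_{i-1}'/F_{i-1}$, the projective map $\mathcal{Z}$, and the functions $\mu_{iq}$ from (\ref{eq:common_view_gamma}). With this done, Propositions \ref{prop:common_view} and \ref{prop:limit} and the corollary after the latter transfer verbatim, since their proofs are a pure induction on the chain rule combined with the limit (\ref{eq:limit_gamma_limit}). Propositions \ref{prop:eqv_Q} and \ref{prop:eqv_Q_common} also carry over, since they rest only on these formal identities. For Lemma \ref{lmm:cycle_CP} I would recheck that its proof really uses only the asymptotic (\ref{eq:cycle_f_diff}) — equivalently, the $q = 1$ case of (\ref{eq:line_limits}) — so that it applies to the abstract $f_i$ without alteration.

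With this machinery in place, Theorem \ref{th:line_main} follows by repeating the argument of Section \ref{sec:th2}: a near-zero fixed point of $\Delta$ of multiplicity $n+1$ forces, through Proposition \ref{prop:eqv_Q} combined with Lemma \ref{lmm:cycle_CP}, the vanishing of $\mathcal{R}_{n-1}(\lambda_1, \ldots, \hat{\lambda}_j, \ldots, \lambda_n)$ for some $j$, so the polynomial $\mathcal{L}_n$ defined by (\ref{eq:main_Q}) serves the purpose; its nontriviality is guaranteed by Lemma \ref{lmm:nonull}. For the explicit form at $n = 1, 2, 3, 4$ I would simply recycle the resultant computations of Section \ref{sec:small}: $\mathcal{R}_1(\mu_1) = \mu_1 - 1$, $\mathcal{R}_2$ as in (\ref{eq:small_P2}), and $\mathcal{R}_3$ as in (\ref{eq:small_P3}); substitution into (\ref{eq:main_Q}) yields $\Lambda_n$ for $n = 1, 2, 3$ and $\Lambda_4 \cdot M(\lambda_1,\lambda_2,\lambda_3) M(\lambda_1,\lambda_2,\lambda_4) M(\lambda_1,\lambda_3,\lambda_4) M(\lambda_2,\lambda_3,\lambda_4)$ for $n = 4$, matching the statement.

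The one point that requires thought rather than computation is to isolate precisely which ingredients of Section 2 depend on the polycycle structure and which are purely analytic. The potentially dynamical ingredients are the sign convention in (\ref{eq:saddle_maps_fs}), the positivity of $\Delta'$ used to justify taking a logarithm, and Trifonov's representation (\ref{eq:trif_repr}). The sign convention never enters the recurrence for $P_{nl}$ or $Q_{nl}$; positivity of $|f_i'|$ is automatic from (\ref{eq:line_limits}) near $x = 0$; and (\ref{eq:trif_repr}) enters only to produce the limits (\ref{eq:limit_gamma_limit}), which here are handed to us as an axiom. Hence no additional calculation is needed beyond what Theorem \ref{th:small} already provides, and the main (modest) obstacle is the bookkeeping that confirms the transfer.
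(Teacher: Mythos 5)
Your proposal coincides with the paper's own argument: the paper proves this theorem by stating that the proofs of Theorem \ref{th:main} and Theorem \ref{th:small} repeat verbatim, with the postulated limits (\ref{eq:line_limits}) playing the role of Lemma \ref{lmm:saddle_limit}, and by reusing the resultants $\mathcal{R}_1$, $\mathcal{R}_2$, $\mathcal{R}_3$ in formula (\ref{eq:main_Q}) exactly as you do. Your accounting of which ingredients (sign convention, positivity of $|f_i'|$, Trifonov's representation (\ref{eq:trif_repr})) are genuinely dynamical is just a more explicit version of the same transfer argument.
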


Доказываются данные теоремы дословным повторением доказательств теоремы \ref{th:main}, предложения \ref{prop:eqv_Q_common} и теоремы \ref{th:small} соответственно. Разница состоит лишь в том, что вместо леммы \ref{lmm:saddle_limit} фигурирует данное по условию соотношние (\ref{eq:line_limits}). Более того, все три теоремы справедливы для конечно-гладких функций $f_i$, потому что единственное место в доказательстве основных теорем данной статьи, где используется бесконечная гладкость векторных полей, --- это та самая лемма \ref{lmm:saddle_limit}, которую заменяет формула (\ref{eq:line_limits}).

\section{Открытые вопросы}
Мы начали статью с описания имеющихся на сегодняшний день результатов, касающихся оценки цикличности полициклов. Может ли изучение кратных предельных циклов помочь в оценке цикличности? Да. Но лишь в оценке снизу. Это можно сформулировать в виде следующих двух гипотез.

Пусть $\gamma$ --- гиперболический полицикл поля $v_0$ на двумерном ориентируемом многообразии, образованный $n$ (возможно, совпадающими) сёдлами с характеристическими числами $\lambda_1, \ldots, \lambda_n$.

\begin{conjecture}
Существует такое открытое подмножество $ U \subset \mathbb{R}^n_{>0}$, что для любого набора характеристических чисел $(\lambda_1, \ldots, \lambda_n) \in U$ в типичном $n$-параметрическом семействе, возмущающем полицикл $\gamma$, рождается предельный цикл кратности $n$ ($n$ предельных циклов).
\end{conjecture}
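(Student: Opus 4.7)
План доказательства распадается на два независимых по сложности шага: \emph{алгебраический}, в котором находятся характеристические числа, при которых соответствующая проективная система имеет действительное решение; и \emph{динамический}, в котором такое алгебраическое решение поднимается до настоящего предельного цикла кратности $n$.

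\paragraph{Алгебраический шаг.} Согласно предложению \ref{prop:eqv_Q_common} с $m = n-2$ необходимым условием рождения предельного цикла кратности $n$ является наличие действительного решения $z \in \mathbb{R}P^{n-1}$ у системы
\begin{align*}
Q_{nl}(\lambda, z) = 0, \quad l = 1, \ldots, n-2.
\end{align*}
Здесь $n-2$ однородных уравнений в пространстве размерности $n-1$: в общем положении комплексное множество решений есть кривая степени $(n-2)!$ (по Безу). Требуется найти открытое подмножество $U \subset \mathbb{R}^n_{>0}$, при котором данная кривая имеет действительную компоненту, не лежащую в объединении координатных гиперплоскостей $\{z_i = 0\}$. Здесь предлагается двигаться в двух направлениях: (а) для малых $n$ воспользоваться явными выражениями для $Q_{nl}$, вычисленными в теореме \ref{th:small}, и явно предъявить подходящую точку $\lambda^\circ$, после чего устойчивость действительности решения обеспечит открытость; (б) для больших $n$ вести индукцию при помощи рекуррентного соотношения (\ref{eq:reccurent_Ps}), используя в шаге индукции уже найденные решения для $Q_{n-1,l}$ и их подъём вдоль дополнительной оси.

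\paragraph{Динамический шаг.} Зафиксируем $\lambda^\circ \in U$ и соответствующее действительное решение $z^*$. Рассмотрим $n$-параметрическое семейство $V$, в котором параметры $\delta_1, \ldots, \delta_n$ напрямую управляют сдвигами $\tau_1(\delta), \ldots, \tau_n(\delta)$ сепаратрисных связок, а характеристические числа сёдел возмущаются фиктивно. Тогда задача сводится к поиску решения системы
\begin{align*}
\Delta(\tau, x) = x, \quad \mathcal{D}^{(l)}(\tau, x) = 0, \quad l = 0, \ldots, n-2,
\end{align*}
состоящей из $n$ уравнений на $n+1$ переменную, сколь угодно близко к точке $(\tau, x) = 0$, причём на искомом решении функция $\mathcal{Z}(\tau, x)$ должна стремиться именно к $z^*$.

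\paragraph{Главное препятствие.} Ключевое затруднение --- доказательство утверждения, \emph{обратного} к предложению \ref{prop:eqv_Q_common}: нужно показать, что регулярное действительное решение $z^*$ алгебраической системы действительно поднимается в последовательность $(\tau_\alpha, x_\alpha) \to 0$, отвечающую предельным циклам кратности $n$. По всей видимости, для этого понадобится вариант теоремы о неявной функции в классах $\underset{\thicksim}{O}^\lambda_r$ и $\tilde{o}^\lambda_r$, учитывающий неравномерность асимптотик производных отображения Пуанкаре вблизи полицикла. Решающий технический момент --- проверка невырожденности якобиана отображения $(\tau_1, \ldots, \tau_n) \mapsto (\Delta - x, \mathcal{D}, \mathcal{D}', \ldots, \mathcal{D}^{(n-2)})$ в подходящем масштабе при $\tau, x \to 0$ вдоль луча $\mathcal{Z} = z^*$. Именно эта невырожденность, тесно связанная со структурой идеала, порождаемого многочленами $Q_{nl}$, представляется наиболее сложной частью плана и, возможно, требует отдельного изучения.
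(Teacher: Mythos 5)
Обратите внимание прежде всего на статус утверждения: в статье оно не доказывается вовсе — это гипотеза из раздела «Открытые вопросы», и авторы прямо пишут, что её доказательство станет, вероятно, сюжетом следующей работы. Поэтому сравнивать ваш текст с «доказательством из статьи» невозможно; сравнивать можно лишь с требованиями к полному доказательству, и здесь ваш текст им не удовлетворяет. Это план, а не доказательство, причём ключевой пробел вы сами честно называете: нужен шаг, \emph{обратный} предложению \ref{prop:eqv_Q_common}, то есть подъём действительного решения $z^*$ системы $Q_{nl}(\lambda,z)=0$, $l=1,\ldots,n-2$, до настоящей последовательности $(\delta_\alpha,x_\alpha)\to 0$, отвечающей предельным циклам кратности $n$. Вся техника статьи (классы $\tilde{o}^\lambda_r$, $\underset{\thicksim}{O}^\lambda_r$, предложения \ref{prop:common_view}--\ref{prop:eqv_Q_common}, лемма \ref{lmm:cycle_CP}) работает только в сторону необходимых условий; вариант теоремы о неявной функции с контролем невырожденности якобиана отображения $(\tau,x)\mapsto(\Delta-x,\mathcal{D},\ldots,\mathcal{D}^{(n-2)})$ в неравномерных асимптотиках вблизи полицикла в статье отсутствует и вами не построен — а именно он и составляет содержание гипотезы.

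Кроме того, и алгебраический шаг у вас не выполнен, а лишь намечен: ни для малых $n$ не предъявлена конкретная точка $\lambda^\circ$ с действительным решением вне координатных гиперплоскостей, ни индукция по $n$ через оператор (\ref{eq:reccurent_Ps}) не проведена (заметьте, что при подъёме с $n-1$ на $n$ добавляется не только переменная, но и новое уравнение, так что «подъём вдоль дополнительной оси» сам по себе ничего не даёт). Наконец, гипотеза говорит о \emph{типичном} $n$-параметрическом семействе, то есть утверждение должно выполняться для любого семейства, размыкающего связки с ненулевой скоростью, а не только для специально выбранного семейства, в котором параметры напрямую равны сдвигам $\tau_i$; этот переход тоже требует отдельного аргумента. Итог: предложенный текст — разумный план атаки, согласующийся с аппаратом статьи, но доказательством гипотезы он не является.
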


\begin{conjecture}\label{conj:reflect}
Существует такое открытое (в индуцированной топологии) подмножество $W$ поверхности $\{(\lambda_1, \ldots, \lambda_n) \in \mathbb{R}^n_{>0} | \lambda_1\ldots\lambda_n = 1\}$, что для любого набора характеристических чисел $(\lambda_1, \ldots, \lambda_n) \in W$ в типичном $(n+1)$-параметрическом семействе, возмущающем полицикл $\gamma$, рождается предельный цикл кратности $n+1$ ($n+1$ предельный цикл).
\end{conjecture}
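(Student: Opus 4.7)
План состоит в том, чтобы <<обратить>> схему доказательства теоремы \ref{th:main}: вместо вывода алгебраического препятствия из существования $(n+1)$-кратного цикла нужно, напротив, построить такой цикл, исходя из разрешимости соответствующей полиномиальной системы. Ключевое наблюдение состоит в том, что на поверхности $\{\lambda_1\cdots\lambda_n = 1\}$ лемма \ref{lmm:cycle_CP} теряет силу, так что становятся возможны решения системы $Q_{nl}(z) = 0$ с положительными значениями всех координат $z_i$.

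Первый, алгебраический шаг --- доказать, что на некотором открытом подмножестве $W$ поверхности $\{\lambda_1\cdots\lambda_n=1\}\cap \mathbb{R}^n_{>0}$ расширенная система $Q_{nl}(\lambda, z) = 0$, $l = 1, \ldots, n$ (необходимое условие рождения $(n+1)$-кратного цикла в $(n+1)$-параметрическом семействе по предложению \ref{prop:eqv_Q_common}), имеет положительное решение в $\mathbb{R}P^{n-1}$. Для $n = 2$ это проверяется непосредственно: из уравнения $Q_{2,1} = (\lambda_1-1)z_1 + (\lambda_2-1)z_2 = 0$ при $\lambda_1\lambda_2 = 1$ находим $z_1/z_2 = 1/\lambda_1 > 0$, причём данное отношение автоматически зануляет и $Q_{2,2}$. Для $n = 3, 4$ аналогичные вычисления проводятся с использованием явных выражений результантов из доказательства теоремы \ref{th:small}. В общем случае план состоит в том, чтобы показать делимость результанта $\mathcal{R}_n$ расширенной системы на $(\lambda_1\cdots\lambda_n - 1)$ --- тогда именно на интересующей нас поверхности возникают нетривиальные проективные решения, --- а положительность в некотором открытом $W$ вытекает из непрерывности по $\lambda$ и явного примера при $n=2$, продолжающегося в большие размерности.

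Второй, геометрический шаг --- реализация найденного проективного решения $z^\star$ как настоящего $(n+1)$-кратного цикла. Выберем $(n+1)$-параметрическое семейство: параметры $\delta_1, \ldots, \delta_n$ отвечают за сдвиги связок $\tau_1(\delta), \ldots, \tau_n(\delta)$, а дополнительный параметр $\delta_{n+1}$ выводит точку $(\lambda_1, \ldots, \lambda_n)$ из поверхности $\{\lambda_1\cdots\lambda_n = 1\}$ в трансверсальном направлении. Рассмотрим отображение
\begin{align*}
\Phi(\delta, x) = \bigl(\Delta(\delta,x)-x,\; \Delta'(\delta,x)-1,\; \mathcal{D}'(\delta,x),\; \ldots,\; \mathcal{D}^{(n-1)}(\delta,x)\bigr) \in \mathbb{R}^{n+1}.
\end{align*}
Требуется найти последовательность $(\delta_\alpha, x_\alpha)\to(0,0)$ с $\mathcal{Z}(\delta_\alpha, x_\alpha)\to z^\star$, для которой $\Phi(\delta_\alpha, x_\alpha) = 0$. Сдвиги $\tau_i$ независимо управляют условием $\Delta - x = 0$ и старшими производными $\mathcal{D}^{(l)}$, $l=1,\ldots,n-1$, а параметр $\delta_{n+1}$ отвечает за условие $\Delta' = 1$: из соотношения (\ref{eq:Oo_ln_Delta}) коэффициент при $\ln x$ в $\ln \Delta'$ есть $\lambda_1\cdots\lambda_n - 1$, имеющий ненулевой градиент в трансверсальном к поверхности направлении. Типичность $(n+1)$-параметрического семейства обеспечивается стандартной теоремой Тома о трансверсальности, применённой к отображению $\Phi$.

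Главная трудность --- именно второй шаг. Настоящая статья доказывает лишь импликацию <<существование цикла $\Longrightarrow$ алгебраическая совместность>>; обратный переход требует тонкого контроля не только главной асимптотики $\mathcal{D}^{(l)}$, описываемой многочленами $Q_{nl}$, но и подчинённых поправок, выражающих зависимость производных $\mathcal{D}^{(l)}$ от параметров $\tau_i$ и $\delta_{n+1}$. Без явного контроля этих поправок невозможно гарантировать невырожденность якобиана $\Phi$ вдоль искомой кривой $(\delta_\alpha, x_\alpha)\to 0$, а также исключить сценарий, при котором многократный корень уходит в бесконечность или распадается на несколько отдельных простых циклов, суммарно не дающих нужной кратности. Именно отсутствие такого анализа в настоящей работе и объясняет, почему соответствующее утверждение сформулировано как гипотеза.
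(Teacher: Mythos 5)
Прежде всего: сравнивать ваш текст не с чем --- утверждение \ref{conj:reflect} в статье \emph{не доказано}, это гипотеза, и автор прямо пишет, что её доказательство откладывается до следующей работы. Поэтому ваш план можно оценивать только на самостоятельную полноту, а как доказательство он не проходит --- что вы, к вашей чести, сами и констатируете в последнем абзаце. Результаты статьи (предложение \ref{prop:eqv_Q_common}, лемма \ref{lmm:cycle_CP}) дают лишь импликацию <<существует $(n+1)$-кратный цикл $\Rightarrow$ предельная точка $z$ функции $\mathcal{Z}$ удовлетворяет алгебраической системе>>; обратный переход, составляющий содержание гипотезы, требует построения при $\delta_\alpha \to 0$ настоящих $(n+1)$-кратных неподвижных точек отображения $\Delta(\delta_\alpha,\cdot)$. Для этого недостаточно знать предельные значения $Q_{nl}$ производных $\mathcal{D}^{(l)}$: нужны равномерные асимптотики с контролем остаточных членов как функций от $\tau_1,\ldots,\tau_n$, $\delta_{n+1}$ и $x$, проверка невырожденности производной отображения $\Phi$ по параметрам в сингулярном пределе (где $Z_i \sim 1/x$ и производные $\Phi$ неограничены, так что ссылка на теорему Тома о трансверсальности сама по себе ничего не даёт --- типичность семейства есть условие на семейство, а не механизм разрешимости уравнения $\Phi=0$ вблизи полицикла), а также исключение сценария распада кратного корня на простые. Именно этот анализ отсутствует и у вас, и в статье; ваш текст --- программа исследования, совпадающая по духу с тем, что автор анонсирует как предмет следующей работы, но не доказательство.

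Две конкретные поправки к алгебраическому шагу. Во-первых, для цикла кратности $n+1$ предложение \ref{prop:eqv_Q_common} (при $m+2=n+1$) даёт лишь $Q_{nl}(z)=0$ для $l=1,\ldots,n-1$; дополнительное условие, происходящее из $\Delta'=1$, --- это не ещё одно уравнение $Q_{n,n}(z)=0$, а именно соотношение $\lambda_1\cdots\lambda_n=1$ (как в доказательстве леммы \ref{lmm:cycle_CP}), так что вашу <<расширенную систему $l=1,\ldots,n$>> следует исправить. Во-вторых, ваша выкладка при $n=2$ верна: при $\lambda_1\lambda_2=1$ уравнение $Q_{2,1}=0$ действительно имеет положительное решение $z_1/z_2=1/\lambda_1$, и оно, как легко проверить, аннулирует и $Q_{2,2}$; но это лишь подтверждает совместность необходимых условий на поверхности $\{\lambda_1\cdots\lambda_n=1\}$ и никак не приближает к реализации кратного цикла, то есть к собственно трудной части гипотезы.
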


В обеих гипотезах типичнось подразумевает, что семейство размыкает каждую сепаратрисную связку с ненулевой скоростью. Вполне вероятно, что доказательство этих двух гипотез станет основным сюжетом следующей статьи.

\end{document}